\newcommand{\CM}{Cohen-Macaulay}
\newcommand{\B}{\mathcal{B} }
\newcommand{\fb}{\mathbf{f} }
\newcommand{\n}{\mathfrak{n} }
\newcommand{\m}{\mathfrak{m} }
\newcommand{\xb}{\mathbf{x} }
\newcommand{\Z}{\mathbb{Z} }
\newcommand{\Cb}{\mathbb{C} }
\newcommand{\C}{\mathcal{C} }
\newcommand{\D}{\mathcal{D} }
\newcommand{\T}{\mathcal{T} }
\newcommand{\I}{\mathcal{I} }
\newcommand{\Q}{\mathbb{Q} }
\newcommand{\rt}{\rightarrow}
\newcommand{\ov}{\overline}
\newcommand{\wh}{\widehat }
\newcommand{\image}{\operatorname{image}}
\newcommand{\mmod}{\operatorname{mod}}
\newcommand{\projdim}{\operatorname{projdim}}
\newcommand{\cone}{\operatorname{cone}}
\newcommand{\height}{\operatorname{height}}
\newcommand{\curv}{\operatorname{curv}}
\newcommand{\cx}{\operatorname{cx}}
\newcommand{\rank}{\operatorname{rank}}
\newcommand{\coker}{\operatorname{coker}}
\newcommand{\Spec}{\operatorname{Spec}}
\newcommand{\CMS}{\operatorname{\underline{CM}}}
\newcommand{\CMa}{\operatorname{CM}}
\newcommand{\Ass}{\operatorname{Ass}}
\newcommand{\Hom}{\operatorname{Hom}}
\newcommand{\Syz}{\operatorname{Syz}}
\newcommand{\sHom}{\operatorname{\underline{Hom}}}
\theoremstyle{plain}
\newtheorem{theorem}{Theorem}[section]
\newtheorem{corollary}[theorem]{Corollary}
\newtheorem{lemma}[theorem]{Lemma}
\newtheorem{proposition}[theorem]{Proposition}
\theoremstyle{definition}
\newtheorem{definition}[theorem]{Definition}
\newtheorem{remark}[theorem]{Remark}
\theoremstyle{remark}
\begin{document}

\title[Grothendieck Groups]{ Grothendieck groups and completions of Gorenstein local rings}
\author{Tony~J.~Puthenpurakal}
\date{\today}
\address{Department of Mathematics, IIT Bombay, Powai, Mumbai 400 076, India}

\email{tputhen@gmail.com}
\subjclass{Primary  13D09,  13D15 ; Secondary 13C14, 13C60}
\keywords{stable category of Gorenstein rings, Hensel rings, Grothendieck groups}

 \begin{abstract}
Let $(A,\m)$ be an excellent Gorenstein local ring of dimension $d \geq 2$ which is an isolated singularity. Let $\wh{A}$ denote the completion of $A$. If $G(A)$ is the Grothendieck group of $A$ then by $G(A)_\Q$ we denote $G(A)\otimes_\Z \Q$.
We prove that the natural map $G(A)_\Q \rt G(\wh{A})_\Q$ is an isomorphism if and only if for any maximal \CM (= MCM) $\wh{A}$-module  $M$ there exists an MCM $A$-module $N$ and integers $r \geq 1$ and $s \geq 0$ (depending on $M$) such that $M^r\oplus \wh{A}^s \cong \wh{N}$. An essential ingredient is the classification of $\Q$-subspaces of $G(\C)_\Q$ (here $\C$ is a skelletaly small triangulated category)
in terms of certain dense subcategories of  $\C$.  We also give criterion for a Henselian Gorenstein ring $B$ (not an isolated singularity) such that the natural map $G(B)_\Q \rt G(\wh{B})_\Q$ is an isomorphism ( when $\dim B = 2, 3$). We give many examples where our result holds.
\end{abstract}
 \maketitle
\section{introduction}
If $H$ is an abelian group let $H_\Q = H\otimes_\Z \Q$.  Furthermore if $f \colon H_1 \rt H_2$ is a homomorphism of abelian groups then let $f_\Q$ denote
$f\otimes 1_\Q$.
Let $(A,\m)$ be a Noetherian local ring. Let $G(A)$ be the Grothendieck group of $A$. Let $\wh{A}$ denote the completion of $A$. We have a natural map $\eta \colon G(A) \rt G(\wh{A})$. In general $\eta$ need not be injective, see \cite{D}.  There are also examples when $\eta_\Q$ is not injective, see \cite{KS}. However if $A$ is a homomorphic image of an excellent regular local ring
and an isolated singularity then $\eta$ is injective, see \cite[1.5(iii)]{KK}.

In this paper we first investigate the case when $\eta_\Q$ is an isomorphism when $(A, \m)$ is an excellent Gorenstein isolated singularity of dimension $d \geq 2$. By an MCM $A$-module $M$ we mean a maximal \CM \ $A$-module (i.e., $M$ is \CM \ and $\dim M = \dim A$). We prove
\begin{theorem}\label{main}
Let $(A,\m)$ be an excellent Gorenstein isolated singularity of dimension $d \geq 2$. Let $\eta \colon G(A) \rt G(\wh{A})$ be the natural map. The following assertions are equivalent:
\begin{enumerate}[\rm (i)]
  \item $\eta_\Q$ is an isomorphism.
  \item For any MCM $\wh{A}$-module  $M$ there exists an MCM $A$-module $N$ and integers $r \geq 1$ and $s \geq 0$ (depending on $M$) such that $M^r\oplus \wh{A}^s \cong \wh{N}$.
\end{enumerate}
\end{theorem}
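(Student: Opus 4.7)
The plan is to pass to the stable categories of MCM modules, apply the classification of $\Q$-subspaces of triangulated Grothendieck groups by dense subcategories mentioned in the abstract, and translate the resulting categorical condition back to the module-theoretic statement (ii).

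First I would reduce to the stable category. Since $A$ is Gorenstein, Buchweitz's equivalence yields $\CMS(A) \simeq D^b(\mathrm{mod}\,A)/\mathrm{Perf}(A)$ and hence $G(\CMS(A)) \cong G(A)/\Z[A]$; the same holds over $\wh A$. The completion-induced map respects these quotients, and since $\eta_\Q$ sends the kernel $\Q[A]$ isomorphically to $\Q[\wh A]$ (both one-dimensional, generated by the ring itself which has nonzero rank), the five-lemma shows $\eta_\Q$ is an isomorphism if and only if the induced map $\bar\eta_\Q \colon G(\CMS(A))_\Q \rt G(\CMS(\wh A))_\Q$ is. Write $\C := \CMS(\wh A)$, which is skeletally small, triangulated, and Krull--Schmidt (as $\wh A$ is Henselian), and let $F \colon \CMS(A) \rt \C$ denote the completion functor.

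Next, I would apply the classification to the $\Q$-subspace $V := \mathrm{image}(G(F)_\Q) \subseteq G(\C)_\Q$. By the classification, $V$ corresponds to a dense subcategory $\D_V \subseteq \C$ characterized (in the concrete form relevant here) as those MCM $\wh A$-modules $M$ for which some power $M^n$ becomes stably isomorphic in $\C$ to $\wh N$ for an MCM $A$-module $N$. Unpacking stable isomorphism via the definition of $\CMS$ gives $M^n \oplus \wh A^s \cong \wh N \oplus \wh A^t$ in $\mathrm{mod}(\wh A)$ for some $s,t \geq 0$; absorbing $\wh A^t$ into $N$ by replacing $N$ with the still-MCM module $N \oplus A^t$ yields condition (ii) with $r = n$. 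Conversely, condition (ii) directly produces $M \in \D_V$. Therefore $V = G(\C)_\Q$---that is, $\bar\eta_\Q$ is surjective---if and only if every MCM $\wh A$-module satisfies (ii). This establishes (i) $\Rightarrow$ (ii) outright, and (ii) $\Rightarrow$ surjectivity of $\eta_\Q$.

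The main obstacle is the remaining half of (ii) $\Rightarrow$ (i): from (ii) one has only surjectivity of $\bar\eta_\Q$, whereas (i) demands bijectivity. The classification yields no information on injectivity, so this must be handled separately. Under the standing hypothesis (excellent Gorenstein isolated singularity), one expects injectivity via \cite[1.5(iii)]{KK}, which applies when $A$ is a homomorphic image of an excellent regular local ring; verifying this applicability---or adapting the argument via an Artin-approximation-style reduction, or via a direct comparison of Euler characteristics of finite-length modules on $A$ and $\wh A$---is the core technical point that must be settled to close the iff.
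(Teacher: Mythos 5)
Your reduction to the stable category and the use of the classification of $\Q$-subspaces by radical dense subcategories is exactly the route the paper takes, and your unpacking of stable isomorphism (absorbing $\wh A^t$ into $N \oplus A^t$) reproduces the paper's passage from Theorem~\ref{main-st} to Theorem~\ref{main}. Two points, one minor and one substantive.

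\emph{Minor:} When you say the classification describes $\D_V$ as ``those $M$ for which some power $M^n$ becomes stably isomorphic to $\wh N$,'' note that the classification only tells you $\D_V = \sqrt{\D}$, where $\D$ is Thomason's dense subcategory for $H = \image\theta$; membership of $M$ in $\sqrt\D$ a priori only gives $[M^n] \in \image\theta$ in the Grothendieck group, not an actual stable isomorphism $M^n \cong \wh N$. The passage from the $K$-theoretic equality to the genuine isomorphism is Theorem~\ref{b-onto}, which uses weak cancellation in $\CMS(\wh A)$ (Krull--Schmidt, as you note) together with the fact that $\psi\colon\CMS(A)\to\CMS(\wh A)$ is an equivalence up to direct summands. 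You gesture at Krull--Schmidt but do not make explicit that this extra step is where it enters.

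\emph{Substantive:} You flag injectivity of $\eta_\Q$ (equivalently $\theta_\Q$) as the open technical point, and propose to obtain it from \cite[1.5(iii)]{KK}, worrying about whether $A$ is a quotient of an excellent regular local ring. The paper does not invoke \cite{KK} here at all, and indeed Remark~(2) explicitly states that the injectivity argument does not require $A$ to be a homomorphic image of an excellent regular ring. Injectivity follows directly from the stable-category setup you have already built: because $A$ is an excellent isolated singularity of dimension $\geq 2$, $\sHom_A(M,N)$ has finite length for $M,N$ MCM, so it is $\m$-adically complete and $\psi$ is fully faithful; by Wiegand \cite[2.9]{W} (see also Takahashi \cite[3.2]{Ta}) every MCM $\wh A$-module is a direct summand of some $\wh N$, so $\psi$ is an equivalence up to direct summands; then Theorem~\ref{b-1} (from \cite[2.1]{P}) gives injectivity of $\theta$, hence of $\eta$ via the short exact sequence $0 \to \Z[A] \to G(A) \to G(\CMS(A)) \to 0$ and its completed analogue. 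In short: the injectivity you leave open is not a gap in the theorem but a gap in your argument, and it closes by the same equivalence-up-to-summands mechanism you already rely on for the b-onto step, rather than by any regular-quotient or Artin-approximation detour.
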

\begin{remark}
\begin{enumerate}
  \item We show that in general $\eta$ is injective (here $A$ need not be an image of an  excellent regular ring). So $\eta_\Q$ is injective.
  \item The assertion  (ii) $\implies$ (i) follows easily from (1). The content of the theorem is (i) $\implies$ (ii).
\end{enumerate}
\end{remark}

\s\label{stable} Let $(A,\m)$ be an excellent Gorenstein isolated  singularity of dimension $d \geq 2$. Let $\CMS(A)$ denote the stable category of maximal \CM \ $A$-modules. We note that $\CMS(A)$ is a triangulated category, see \cite[4.4.7]{B}. Let $G(\CMS(A))$ be its Grothendieck group. We have a natural map $\theta \colon G_0(\CMS(A)) \rt G_0(\CMS(\wh{A}))$. Under the assumptions of Theorem \ref{main} it is not difficult to prove that $\theta$ is injective. Furthermore we show  that $\eta $ (resp. $\eta_\Q$) is an isomorphism if and only if $\theta$ (resp $\theta_\Q$) is an isomorphism. We note that the assumption $\theta$ is an isomorphism implies that
the natural map $\CMS(A) \rt \CMS(\wh{A})$ is an equivalence, see \cite[2.4]{P}. Theorem \ref{main} follows from the following:
\begin{theorem}\label{main-st}
(with hypotheses as in \ref{stable}). The following assertions are equivalent:
\begin{enumerate}[\rm (i)]
  \item $\theta_\Q$ is an isomorphism.
  \item For any MCM $\wh{A}$-module  $M$ there exists an MCM $A$-module $N$ and an integer $r \geq 1$ (depending on $M$) such that $M^r\cong \wh{N}$ in $\CMS(\wh{A})$.
\end{enumerate}
\end{theorem}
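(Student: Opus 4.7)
The plan is as follows. The implication (ii) $\Rightarrow$ (i) is the easy direction and I plan to dispose of it first: if $M^r \cong \wh{N}$ in $\CMS(\wh{A})$ then $r[M] = [\wh{N}] = \theta([N])$ in $G_0(\CMS(\wh{A}))$, so $[M] = \tfrac{1}{r}\theta_\Q([N]) \in \image(\theta_\Q)$. Thus $\theta_\Q$ is surjective, and combined with the injectivity of $\theta_\Q$ noted in~\ref{stable} this yields (i).

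The substantive direction is (i) $\Rightarrow$ (ii), for which I plan to invoke the classification of $\Q$-subspaces of $G_0(\C)_\Q$ in terms of certain dense subcategories of a skeletally small triangulated category $\C$ that the abstract advertises as the key ingredient of the paper. Take $\C := \CMS(\wh{A})$ and let $\D \subseteq \C$ be the full subcategory consisting of those MCM $\wh{A}$-modules $M$ for which there exist $r \geq 1$ and an MCM $A$-module $N$ with $M^r \cong \wh{N}$ in $\CMS(\wh{A})$. The $\Q$-subspace of $G_0(\C)_\Q$ spanned by $\{[M]\otimes 1 : M \in \D\}$ will be exactly $V := \image(\theta_\Q)$: every $M \in \D$ satisfies $[M]\otimes 1 = \tfrac{1}{r}\theta_\Q([N]) \in V$, and conversely each generator $\theta_\Q([N]) = [\wh{N}]$ of $V$ lies in this span because $\wh{N} \in \D$ (take $r=1$). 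Once $\D$ is confirmed to be of the type demanded by the classification, the bijection identifies $\D$ with the $\Q$-subspace $V$; hypothesis (i) says $V = G_0(\C)_\Q$, which corresponds under the bijection to $\C$ itself, forcing $\D = \C$. This is precisely assertion (ii).

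The main obstacle is thus to verify that $\D$ satisfies the closure axioms of a ``certain dense subcategory.'' Closure under isomorphism and shifts is clear (completion commutes with the suspension in the Gorenstein stable category), and closure under direct summands will follow from Krull--Schmidt in $\CMS(\wh{A})$. The delicate point is closure under distinguished triangles, possibly only up to taking further direct-sum powers. Given a triangle $X \to Y \to Z \to X[1]$ in $\CMS(\wh{A})$ with $X, Y \in \D$, I would pick $a, b \geq 1$ and MCM $A$-modules $N_1, N_2$ with $X^a \cong \wh{N_1}$ and $Y^b \cong \wh{N_2}$; the $ab$-fold direct sum of the triangle then has the form $\wh{N_1^b} \to \wh{N_2^a} \to Z^{ab} \to \wh{N_1^b}[1]$. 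Because $\Hom_{\wh{A}}(\wh{N_1^b}, \wh{N_2^a}) = \Hom_A(N_1^b, N_2^a) \otimes_A \wh{A}$, the displayed morphism is an $\wh{A}$-linear combination of completions of $A$-linear maps, but need not itself be such a completion. The hard step will be to exploit Krull--Schmidt over the Henselian ring $\wh{A}$ together with the excellence of $A$ to replace this combination --- after passing to a further direct-sum power $Z^{abk}$ --- by the completion of a single $A$-linear map, whose mapping cone in $\CMS(A)$ then completes to $\wh{N_3}$ for some MCM $A$-module $N_3$, placing $Z$ in $\D$.
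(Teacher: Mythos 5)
Your (ii) $\Rightarrow$ (i) argument is the same as the paper's. For (i) $\Rightarrow$ (ii) you take a recognizably different route: the paper works top--down from the subgroup $H = \image\theta \subseteq G(\CMS(\wh{A}))$, takes the Thomason dense subcategory $\D'$ corresponding to $H$, applies the $\sqrt{\phantom{X}}$ construction and the classification theorem to conclude $\sqrt{\D'} = \CMS(\wh{A})$, and only then invokes the ``onto'' result (Theorem~\ref{b-onto}, resting on weak cancellation) to upgrade $[M^r] \in \image\theta$ to an honest isomorphism $M^r \cong \wh{N}$. You instead define $\D = \{M : M^r \cong \wh{N} \text{ for some } r,N\}$ bottom--up and try to verify directly that it is a radical dense triangulated subcategory of $\CMS(\wh{A})$ whose image in $G(\C)_\Q$ is $\image\theta_\Q$. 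That is a legitimate variant (your $\D$ coincides with the paper's $\sqrt{\D'}$), and if completed it would also finish via Theorem~\ref{main-cat}. Note, though, that closure under direct summands is not something you need to check: radical dense subcategories are not required to be thick, and indeed typically are not.

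The real issue is that the ``hard step'' you flag --- closure of $\D$ under triangles --- is not a place where Krull--Schmidt together with excellence and passage to further direct-sum powers will rescue you, and as written that step is a genuine gap. What you are missing is the following fact, which the paper uses in the proof of Theorem~\ref{main3-body}: since $(A,\m)$ is a Gorenstein isolated singularity, $\sHom_A(M,N)$ has \emph{finite length} for all MCM $A$-modules $M,N$; it is therefore $\m$-adically complete, and the natural map $\sHom_A(M,N) \rt \sHom_{\wh A}(\wh M, \wh N)$ is an isomorphism. In other words the completion functor $\psi \colon \CMS(A) \rt \CMS(\wh A)$ is fully faithful. Triangles in $\CMS(\wh A)$ are built from morphisms in the \emph{stable} category, not module maps, so your worry that the map $\wh{N_1^b} \rt \wh{N_2^a}$ is ``only an $\wh A$-linear combination of completions of $A$-linear maps'' is a red herring: in $\CMS(\wh A)$ it is $\psi(g)$ for a uniquely determined $g \colon N_1^b \rt N_2^a$ in $\CMS(A)$, and since $\psi$ is triangulated, $Z^{ab} \cong \psi(\cone g) \cong \wh{\cone g}$, putting $Z$ in $\D$. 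Without this finite-length/full-faithfulness observation your argument does not close, and the ingredients you list (Krull--Schmidt on objects, excellence) do not by themselves produce it.
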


\s Theorem \ref{main-st} follows from a more general result on 1-1 correspondence of $\Q$-subspaces of the Grothendieck group $G(\C)_\Q$ of a triangulated category with certain dense subcategories of $\C$.
We recall a result due to Thomason \cite{T}. Let $\C$ be a skeletally small triangulated category. Recall a subcategory $\D$ is dense in $\C$ if the smallest thick subcategory of $\C$ containing $\D$ is $\C$ itself. In \cite{T} a one-to one correspondence between dense subcategories of $\C$ and subgroups of the Grothendieck group $G_0(\C)$ is given.
Our interest was in finding a  similar correspondence for $\Q$-subspaces of $G(\C)_\Q$.
\begin{definition}\label{def-rad}
A dense subcategory $\D$ of $\C$ is said to be a radical dense sub-category of $\C$ if $U^n \in \D$ for some $n \geq 1$ implies $U \in \D$.
\end{definition}
It is easy to construct radical dense subcategories of $\C$. We show
\begin{proposition}\label{sqrt}
Let  $\D$ be a dense subcategory of $\C$. Let
$$\sqrt{\D} = \{ U \in \C \mid U^n \in \D \ \text{for some $n \geq 1$} \}. $$
Then
\begin{enumerate}[\rm (1)]
\item $\D \subseteq \sqrt{\D}$.
  \item $\sqrt{\sqrt{D}} = \sqrt{\D}$.
  \item $\sqrt{\D}$ is a radical dense subcategory of $\C$.
   \item $\D$ is a radical dense subcategory of $\C$ if and only if $\sqrt{\D} = \D$.
\end{enumerate}
\end{proposition}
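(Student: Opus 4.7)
The plan is to verify the four assertions in the order stated; each reduces to elementary manipulations with the defining condition $U^n \in \D$. Assertion (1) is immediate: any $U \in \D$ satisfies $U^1 = U \in \D$. For (2), the inclusion $\sqrt{\D} \sub \sqrt{\sqrt{\D}}$ is (1) applied to $\sqrt{\D}$; conversely, if $U \in \sqrt{\sqrt{\D}}$ then $U^n \in \sqrt{\D}$ for some $n \geq 1$, hence $(U^n)^m = U^{nm} \in \D$ for some $m \geq 1$, so $U \in \sqrt{\D}$.

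The substantive part is (3). I first argue $\sqrt{\D}$ is a full triangulated subcategory. Closure under isomorphism is automatic, since an isomorphism $U \cong U'$ induces $U^n \cong (U')^n$, which $\D$ already respects. Closure under the shift functor follows from $(U[1])^n = U^n[1]$, using that $\D$ is triangulated. The only delicate step is closure under cones. Given a triangle $U \rt V \rt W \rt U[1]$ with $U, V \in \sqrt{\D}$, choose $n, m$ with $U^n, V^m \in \D$ and set $k = nm$, so that $U^k$ and $V^k$ both lie in $\D$ (triangulated subcategories are closed under finite direct sums). Taking the $k$-fold direct sum of the original triangle produces a distinguished triangle $U^k \rt V^k \rt W^k \rt U^k[1]$; since $\D$ is triangulated and contains the first two terms, $W^k \in \D$, whence $W \in \sqrt{\D}$. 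Density of $\sqrt{\D}$ in $\C$ is inherited from $\D$ via $\D \sub \sqrt{\D}$, since by hypothesis every object of $\C$ is a summand of one in $\D$. Radicality of $\sqrt{\D}$ is the same bookkeeping as (2): if $U^n \in \sqrt{\D}$, then $(U^n)^m \in \D$ for some $m$, so $U \in \sqrt{\D}$.

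Assertion (4) is then formal. The ``if'' direction follows from (3): if $\sqrt{\D} = \D$, then $\D$ inherits the radical property proved for $\sqrt{\D}$. For the converse, if $\D$ is already radical dense, then $U \in \sqrt{\D}$ means $U^n \in \D$ for some $n \geq 1$, and radicality of $\D$ yields $U \in \D$; combined with (1) this gives $\sqrt{\D} = \D$.

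The one nontrivial point, which I would flag as the main (modest) obstacle, is closure of $\sqrt{\D}$ under cones in (3). Everything else is formal; the cone step is where one genuinely uses that distinguished triangles survive under finite direct sums, permitting one to align the exponents $n$ and $m$ by passing to their product.
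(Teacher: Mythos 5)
Your proof is correct and follows essentially the same route as the paper: the key step in (3) is identical, namely aligning the exponents via $k = nm$ and using that the $k$-fold direct sum of a distinguished triangle is distinguished (the paper cites Neeman 1.2.1 for this) to conclude $W^k \in \D$. The remaining parts are the same formal bookkeeping in both.
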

Recall if $\D$ is a dense subcategory of $\C$ then the natural map $\delta^\D \colon G(\D) \rt G(\C)$ is injective. So we have an inclusion $\delta^\D_\Q \colon G(\D)_\Q \rt G(\C)_\Q$.
We show that $\image \delta^\D_\Q = \image \delta^{\sqrt{\D}}_\Q$, see \ref{equal}. We prove
\begin{theorem}\label{main-cat}
Let $\C$ be a skeletally small triangulated category.
\begin{enumerate}[\rm (1)]
  \item Let $H$ be a $\Q$-subspace of $G(\C)_\Q$. Then there exists a radical  dense subcategory $\D$ of $\C$ such that $\image \delta^\D_\Q = H$.
  \item If $\D_1, \D_2$ are two radical dense subcategories of $\C$ such that $H = \image \delta^{\D_1}_\Q = \image \delta^{\D_2}_\Q$ then $\D_1 = \D_2$. Thus we may write $\D_H$ to be the unique radical  dense subcategory of $\C$ with $\image \delta^\D_\Q = H$.
\item Let $H_1, H_2$ be $\Q$-subspace's of $G(\C)_\Q$. We have $\D_{H_1} \subseteq \D_{H_2}$ if and only if $H_1 \subseteq H_2$.
\end{enumerate}
\end{theorem}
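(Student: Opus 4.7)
The plan is to translate the theorem, via Thomason's bijection, into a purely group-theoretic statement about $G(\C)$, and then solve it algebraically. Recall that Thomason's theorem attaches to each dense subcategory $\D$ the subgroup $K_\D = \image(\delta^\D) \sub G(\C)$ and recovers $\D$ as $\D_K = \{U \in \C \mid [U] \in K\}$. The central intermediate claim I would establish is that under this bijection, radical dense subcategories correspond exactly to those subgroups $K$ of $G(\C)$ for which $G(\C)/K$ is torsion-free. Granting this, all three parts follow quickly from the standard fact that such ``pure'' subgroups are in order-preserving bijection with $\Q$-subspaces of $G(\C)_\Q$ via $K \mapsto K \otimes \Q$, with inverse $H \mapsto \phi^{-1}(H)$, where $\phi \colon G(\C) \rt G(\C)_\Q$ is the natural map.

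The main technical step is showing that $\D$ radical implies $G(\C)/K_\D$ is torsion-free. Given $g \in G(\C)$ with $ng \in K_\D$, I must produce $g \in K_\D$. The trick, specific to triangulated categories, is that every class in $G(\C)$ is the class of a single object: writing $g = [U] - [V] = [U] + [\Sigma V] = [U \oplus \Sigma V]$ (using $[\Sigma V] = -[V]$), I set $W = U \oplus \Sigma V$ so that $g = [W]$. Then $ng = [W^n] \in K_\D$ forces $W^n \in \D$; by the radical hypothesis $W \in \D$, so $g = [W] \in K_\D$. The converse direction is immediate: if $U^n \in \D_K$ then $n[U] \in K$, and torsion-freeness of $G(\C)/K$ gives $[U] \in K$, i.e., $U \in \D_K$. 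I expect this ``radical $\Leftrightarrow$ pure'' lemma to be the main obstacle; the suspension trick is the only nonroutine ingredient in the whole argument.

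With this lemma in hand, (1)--(3) fall out. For (1), given $H$ set $K = \phi^{-1}(H)$; since $G(\C)/K$ embeds into the $\Q$-vector space $G(\C)_\Q/H$ it is torsion-free, hence $\D_K$ is radical dense, and using the fact that every element of $G(\C)_\Q$ has the form $g \otimes 1/n$ one verifies $\image \delta^{\D_K}_\Q = K \otimes \Q = H$. For (2), if $\D_1, \D_2$ are radical dense with common image $H$, then each corresponding subgroup $K_i$ is pure, so $K_i = \phi^{-1}(K_i \otimes \Q) = \phi^{-1}(H)$; thus $K_1 = K_2$ and Thomason yields $\D_1 = \D_2$. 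Part (3) follows because both Thomason's bijection and $H \mapsto \phi^{-1}(H)$ are manifestly monotone, so the chain $H_1 \sub H_2 \iff \phi^{-1}(H_1) \sub \phi^{-1}(H_2) \iff \D_{H_1} \sub \D_{H_2}$ is automatic.
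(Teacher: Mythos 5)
Your proof is correct and takes a genuinely different (and in fact more streamlined) route than the paper. The paper proves parts (1) and (2) via a Zorn's-lemma argument: it introduces the poset $\I(H)$ of all dense subcategories whose rational image equals $H$, shows every chain has an upper bound, shows maximal elements are radical (via the $\sqrt{\D}$-construction and the lemma that $G(\D)_\Q \to G(\sqrt{\D})_\Q$ is an isomorphism), then proves there is a unique maximal element and finally that this is the only radical dense subcategory in $\I(H)$. You instead isolate, as the central pivot, the group-theoretic characterization that $\D$ is radical iff $G(\C)/G(\D)$ is torsion-free; interestingly, the paper also proves this as a standalone ``criterion'' (its Proposition 3.10) but never uses it in the proof of the main theorem. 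Coupled with the elementary bijection between subgroups $K \sub G(\C)$ with torsion-free quotient and $\Q$-subspaces $H \sub G(\C)_\Q$ (via $K \mapsto K_\Q$, $H \mapsto \phi^{-1}(H)$), your argument reduces everything to Thomason's bijection and purely algebraic bookkeeping. The gain is conceptual transparency and the elimination of Zorn's lemma (the choice is effectively outsourced to Thomason's theorem and the explicit formula $K = \phi^{-1}(H)$). What the paper's longer route buys is the $\sqrt{\D}$ construction and the identity $\image\delta^{\sqrt{\D}}_\Q = \image\delta^{\D}_\Q$, which it reuses later (notably in the proof of Theorem 1.3), so the detour is not wasted in the paper's larger architecture. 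Both arguments crucially rely on the same two facts: that every element of $G(\C)$ is a single class $[W]$ (the suspension trick), and that every element of $G(\C)_\Q$ has the form $\frac{1}{m}[U]$. Your subsidiary verifications --- that $\phi^{-1}(K_\Q) = K$ when $G/K$ is torsion-free (which requires observing that all torsion of $G(\C)$ lies in such a $K$), and that $\phi^{-1}(H)_\Q = H$ --- are routine but should be spelled out for completeness; they hold as you indicate.
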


\s \label{sub} There are many naturally defined thick subcategories of $\C = \CMS(A)$. We consider pairs $\D, \wh{\D}$ of thick subcategories of $\C$ and $\wh{C} = \CMS(\wh{A})$ respectively with the following properties:
\begin{enumerate}
  \item If $M \in \D$ then $\wh{M} \in \wh{\D}$
  \item If $\wh{M} \in \wh{\D}$ then $M \in \D$.
\end{enumerate}
Let $\T = \C/\D$ and $\wh{\T} = \wh{\C}/\wh{\D}$. the Verdier quotient. We have natural maps $\theta \colon G(\C) \rt G(\wh{\C})$, $\alpha \colon G(\D) \rt G(\wh{\D})$ and $\beta \colon G(\T) \rt G(\wh{\T})$.

For example
\begin{enumerate}
  \item $A$ is a complete intersection of codimension $c$. Let For $1\leq i \leq c$ let $\D_i = \CMS^{\leq i}(A)$ consisting of MCM modules of complexity $\leq i$. Set $\wh{\D_i} = \CMS^{\leq i}(\wh{A})$.
  \item Let $A$ be a Gorenstein ring which is not a complete intersection. Then note that $\curv_A k  > 1$, see \cite[8.2.1]{A}. Also $\curv_{\wh{A}} k = \curv_A k$. For $1 < \beta \leq  \curv_A k$ let $\D_\beta$ (resp. $\wh{\D}_\beta$) be the set consisting of MCM $A$-modules (resp. MCM $\wh{A}$-modules) with curvature $< \beta$.
  \item Suppose $A = B/(\fb)$ where $(B, \n)$ is a Gorenstein local ring and $\fb = f_1, \ldots, f_r \in \n^2$ a $B$-regular sequence. We note that $\wh{A} = \wh{B}/(\fb)\wh{B}$.
  Let $\D$ (resp. $\wh{D}$) consist of MCM $A$-modules $M$ (resp. MCM $\wh{A}$-modules) with $\projdim_B M$ finite (resp. $\projdim_{\wh{B}}M$ finite).
\end{enumerate}
For definition of complexity and curvature see \ref{cx-curv}.
We prove:
\begin{theorem}(\label{sub-ver}(with hypotheses as in \ref{sub})
Let $(A,\m)$ be an excellent Gorenstein ring of dimension $d \geq 2$ which is an isolated singularity. The following assertions are equivalent:
\begin{enumerate}[\rm (i)]
  \item $\theta_\Q$ is an isomorphism.
  \item  Both $\alpha_\Q$ and $\beta_\Q$ are isomorphisms.
\end{enumerate}
\end{theorem}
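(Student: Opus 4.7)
The Verdier localization sequences for $\D \hookrightarrow \C \to \T$ and $\wh{\D} \hookrightarrow \wh{\C} \to \wh{\T}$ give, after tensoring with $\Q$, the commutative diagram
\[
\begin{CD}
G(\D)_\Q @>i_{*\Q}>> G(\C)_\Q @>q_{*\Q}>> G(\T)_\Q \\
@V\alpha_\Q VV @V\theta_\Q VV @V\beta_\Q VV \\
G(\wh{\D})_\Q @>\wh{i}_{*\Q}>> G(\wh{\C})_\Q @>\wh{q}_{*\Q}>> G(\wh{\T})_\Q
\end{CD}
\]
with both rows exact at the middle term and with $q_{*\Q}, \wh{q}_{*\Q}$ surjective. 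My plan is to combine diagram chases on this diagram with the MCM-lifting characterization of Theorem \ref{main-st} and the dense-subcategory correspondence of Theorem \ref{main-cat}.

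\textbf{Direction (ii) $\Rightarrow$ (i).} Assume $\alpha_\Q$ and $\beta_\Q$ are isomorphisms. Surjectivity of $\theta_\Q$ is a routine four-lemma chase using right-surjectivity of each row. For injectivity, suppose $\theta_\Q(z)=0$; commutativity and injectivity of $\beta_\Q$ force $q_{*\Q}(z)=0$, so $z = i_{*\Q}(u)$ for some $u \in G(\D)_\Q$, and then $\wh{i}_{*\Q}\alpha_\Q(u)=0$. The remaining task is to verify $\alpha_\Q^{-1}(\ker \wh{i}_{*\Q}) \subseteq \ker i_{*\Q}$, so that $u \in \ker i_{*\Q}$ and hence $z=0$. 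Through Theorem \ref{main-cat} each of these kernels is identified with a specific radical dense subcategory, and the compatibility hypothesis $M\in\D \iff \wh{M}\in\wh{\D}$ forces the bijection to match them under $\alpha_\Q$.

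\textbf{Direction (i) $\Rightarrow$ (ii).} Assume $\theta_\Q$ is an isomorphism. By Theorem \ref{main-st}, every MCM $\wh{A}$-module $M$ admits an MCM $A$-module $N$ and $r \geq 1$ with $M^r \cong \wh{N}$ in $\CMS(\wh{A})$. If $M \in \wh{\D}$ then $\wh{N} \cong M^r \in \wh{\D}$ by thickness, hence $N \in \D$ by the compatibility hypothesis; this establishes the analog of condition (ii) of Theorem \ref{main-st} for the pair $(\D,\wh{\D})$. Since the proof of Theorem \ref{main-st} rests on Theorem \ref{main-cat}, and the latter holds for any skeletally small triangulated category, the same argument applied inside $\wh{\D}$ yields $\alpha_\Q$ an isomorphism. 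A parallel argument, passing to representatives in $\wh{\C}$ and then projecting to $\wh{\T}$, gives the analogous lifting property in $\wh{\T}$, whence $\beta_\Q$ is also an isomorphism.

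\textbf{Main obstacle.} The principal difficulty is the lack of left-exactness of the Verdier localization on Grothendieck groups: $i_{*\Q}$ and $\wh{i}_{*\Q}$ can carry nonzero kernels (coming from a $K_1$ term). The injectivity step in (ii)$\Rightarrow$(i) requires matching these kernels through $\alpha_\Q$, and the non-trivial content is that the compatibility hypothesis $M\in\D \iff \wh{M}\in\wh{\D}$, mediated by Theorem \ref{main-cat}, is precisely the input needed. A secondary delicate point is in (i)$\Rightarrow$(ii), where the argument for Theorem \ref{main-st} must be transported from $\CMS(\wh{A})$ to the thick subcategory $\wh{\D}$ and the Verdier quotient $\wh{\T}$; this is possible because Theorem \ref{main-cat} is formulated for arbitrary skeletally small triangulated categories.
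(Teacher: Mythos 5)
Your setup (the commuting ladder of Verdier localization sequences) is the right starting point, and the four-lemma chase for surjectivity of $\theta_\Q$ in (ii)$\Rightarrow$(i) is fine. However, there are two serious issues.

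For the injectivity step in (ii)$\Rightarrow$(i), your invocation of Theorem~\ref{main-cat} to ``identify the kernels with radical dense subcategories and match them under $\alpha_\Q$'' does not lead anywhere concrete: Theorem~\ref{main-cat} classifies $\Q$-subspaces of $G(\C)_\Q$ in terms of dense subcategories, but it says nothing about kernels of non-injective maps like $i_{*\Q}$, and the compatibility hypothesis $M\in\D\iff\wh M\in\wh\D$ does not obviously translate into a statement about those kernels through it. The clean route, which you miss, is that $\theta_\Q$ is \emph{already} injective, unconditionally: the completion functor $\psi\colon\CMS(A)\to\CMS(\wh A)$ is an equivalence up to direct summands (isolated singularity, Elkik/Wiegand), so $\theta$ is injective by Theorem~\ref{b-1}. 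Once you know this, your ``remaining task'' $\alpha_\Q^{-1}(\ker\wh i_{*\Q})\subseteq\ker i_{*\Q}$ is immediate from $\theta_\Q(i_{*\Q}(u))=\wh i_{*\Q}(\alpha_\Q(u))=0$ and injectivity of $\theta_\Q$. So this direction is salvageable, but the argument you propose is both vague and unnecessarily indirect.

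The direction (i)$\Rightarrow$(ii) has a genuine gap. Your use of Theorem~\ref{main-st} to transfer the lifting property from $\wh\C$ to $\wh\D$ (and, after projecting, to $\wh\T$) is a nice observation and it does give \emph{surjectivity} of $\alpha_\Q$ and $\beta_\Q$. But you never establish \emph{injectivity} of $\alpha_\Q$ and $\beta_\Q$, and this does not follow from $\theta_\Q$ being an isomorphism: since $i_{*\Q}\colon G(\D)_\Q\to G(\C)_\Q$ is typically not injective (Verdier localization is not left exact on Grothendieck groups -- the ``$K_1$ obstruction'' you yourself flag), knowing $\theta_\Q\circ i_{*\Q}=\wh i_{*\Q}\circ\alpha_\Q$ with $\theta_\Q$ injective tells you nothing about $\ker\alpha_\Q$. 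The paper handles this by proving unconditionally (Proposition~\ref{first}(2),(3)) that the induced functors $\psi^\sharp\colon\D\to\wh\D$ and $\ov\psi\colon\T\to\wh\T$ are equivalences up to direct summands -- the latter being the substantive step, carried out by a calculus-of-fractions argument using weak cancellation and Theorem~\ref{b-onto} -- so that $\alpha$ and $\beta$ are injective by Theorem~\ref{b-1}. Nothing in your proposal replaces this.

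As a point of comparison: the paper's actual argument is cleaner than either direction you sketch. It first proves $\theta,\alpha,\beta$ all injective, then proves (Theorem~\ref{abc}) the short exact sequence of cokernels $0\to\coker\alpha\to\coker\theta\to\coker\beta\to 0$; the key step there is showing that $\ker\bigl(G(\D)\to G(\C)\bigr)$ maps isomorphically onto $\ker\bigl(G(\wh\D)\to G(\wh\C)\bigr)$, again via weak cancellation plus the compatibility hypothesis. Both directions of the theorem (for $\theta$ and for $\theta_\Q$) then follow at once. Your lifting-property approach to surjectivity in (i)$\Rightarrow$(ii) is a genuinely different and workable idea, but to complete it you still need Proposition~\ref{first}(2),(3) for injectivity -- at which point the exact-sequence argument is just as short.
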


\s Now assume $A$ is a quotient of a regular ring $T$. Let $A_*(A) = \bigoplus_{i = 0}^{\dim A}A_i(A)$ denote the Chow group of $A$, see \ref{chow}. By the singular Riemann-Roch theorem we have an isomorphism $\tau_{T/A} \colon G(A)_\Q \rt A_*(A)_\Q$, see \cite[Chapters 18 and 20]{F}. We note that if $A$ is an  excellent, henselian, Gorenstein isolated singularity  of dimension $d \geq 2$ and a homomorphic image of a regular ring  then $G(A) \cong G(\wh{A})$. So we have that the natural map $A_i(A)_\Q \rt A_i(\wh{A})_\Q$ is an  isomorphism for all $i \geq 0$. We note that in general $A_0(A)_\Q = 0$. We prove
\begin{theorem}\label{chow-1}
Let $(A,\m)$ be an excellent Henselian, Gorenstein ring of dimension $d \geq 2$. Assume $A$ is a quotient of a regular local ring $T$. Assume $A$ satisfies $R_{d-2}$ and that the completion of
$A$ is a domain (automatic if $d \geq 3$). Then the natural map $A_1(A)_\Q \rt  A_1(\wh{A})_\Q$ is an isomorphism.
\end{theorem}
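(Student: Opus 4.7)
The plan has two parts: injectivity of $A_1(A)_\Q \rt A_1(\wh A)_\Q$, which is quick from general facts already in the excerpt, and surjectivity, handled via a localization exact sequence for Chow groups. For injectivity, I would combine the first remark after Theorem~\ref{main} (that $\eta\colon G(A)\rt G(\wh A)$ is injective in general) with the singular Riemann--Roch isomorphism $\tau_{T/A}\colon G(A)_\Q\xrightarrow{\sim} A_*(A)_\Q$. Since $A\rt\wh A$ is a regular morphism of relative dimension zero (by excellence), its relative Todd class is trivial and $\tau$ intertwines $\eta_\Q$ with the flat pullback $f^*\colon A_*(A)_\Q \rt A_*(\wh A)_\Q$, which is strictly graded; restricting to degree one gives the desired injectivity.

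For surjectivity, let $Z=V(J)\sub\Spec A$ denote the singular locus, so $\dim Z\leq 1$ by $R_{d-2}$; let $U=\Spec A\setminus Z$ and define $\wh Z,\wh U\sub\Spec\wh A$ analogously. The localization exact sequences for Chow groups fit in the commutative diagram
\begin{equation*}
\begin{array}{ccccccc}
A_1(Z)_\Q & \rt & A_1(A)_\Q & \rt & A_1(U)_\Q & \rt & 0\\
\downarrow \alpha & & \downarrow & & \downarrow \beta & & \\
A_1(\wh Z)_\Q & \rt & A_1(\wh A)_\Q & \rt & A_1(\wh U)_\Q & \rt & 0
\end{array}
\end{equation*}
with exact rows; a four-lemma argument reduces surjectivity of the middle vertical to that of $\alpha$ and $\beta$. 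For $\alpha$: $A_1(Z)_\Q$ has basis $\{[A/\mathfrak{p}_i]\}_{i=1}^s$ indexed by the height-$(d-1)$ minimal primes of $J$. Each $A/\mathfrak{p}_i$ is a one-dimensional Henselian excellent domain, whose integral closure $(A/\mathfrak{p}_i)^\nu$ is a local DVR (because a Henselian semilocal domain is local), and completing the finite inclusion $A/\mathfrak{p}_i\hookrightarrow(A/\mathfrak{p}_i)^\nu$ (with finite-length cokernel) embeds $\wh{A/\mathfrak{p}_i}$ into a complete DVR. Hence $\mathfrak{p}_i\wh A$ is itself prime, and these $\wh\q_i := \mathfrak{p}_i\wh A$ exhaust the height-$(d-1)$ minimal primes of $J\wh A$; thus $\alpha$ sends the basis element $[A/\mathfrak{p}_i]$ to the basis element $[\wh A/\wh\q_i]$ and is an isomorphism.

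The main obstacle is surjectivity of $\beta$: a height-$(d-1)$ prime $\wh\q\in\wh U$ may contract to a prime $\mathfrak{p}=\wh\q\cap A$ of height $h<d-1$, a ``spurious'' case with no obvious source in $A$. My plan here is an Artin-approximation argument. Set $B=A/\mathfrak{p}$; the same local-integral-closure argument applied to $B^\nu$ shows $B$ is analytically irreducible, so $\wh B=\wh A/\mathfrak{p}\wh A$ is a domain and $\wh\q/\mathfrak{p}\wh A$ is a height-$(d-1-h)$ prime of $\wh B$. Pick $\ov y_1,\dots,\ov y_{d-1-h}\in\wh\q/\mathfrak{p}\wh A$ forming a system of parameters for $\wh B/(\wh\q/\mathfrak{p}\wh A)$, approximate them by elements of $B$ via Artin's theorem for the Henselian excellent ring $B$, lift to $f_i\in A$, and set $\mathfrak{p}'=(\mathfrak{p},f_1,\dots,f_{d-1-h})A$, a height-$(d-1)$ ideal of $A$ with $\wh\q$ among the minimal primes of $\mathfrak{p}'\wh A$. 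Then $[A/\mathfrak{p}']\in A_1(U)_\Q$ maps to a positive rational multiple of $[\wh A/\wh\q]$ in $A_1(\wh U)_\Q$. The key technical step is ensuring that the approximated ideal $\mathfrak{p}'\wh A$ genuinely has $\wh\q$ as a minimal prime (rather than only nearby ones), which will require iterating the approximation argument together with careful use of the Henselian property of $B$.
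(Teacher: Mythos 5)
Your injectivity argument is essentially the paper's (inject $G(A)\to G(\wh A)$, then push through Riemann--Roch and the graded compatibility of flat pullback). The organizing idea for surjectivity --- the localization exact sequence $A_1(Z)_\Q\to A_1(\Spec A)_\Q\to A_1(U)_\Q\to 0$ and the reduction, via the four lemma, to surjectivity of $\alpha$ and $\beta$ --- is a legitimately different decomposition from the paper's (the paper instead works through $G(\CMS(\wh A))$), and your argument that $\alpha$ is an isomorphism (analytic irreducibility of the one-dimensional Henselian excellent domains $A/\mathfrak{p}_i$ via locality of the integral closure) is sound.

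The gap is in the surjectivity of $\beta$, and it is not a small one. The ``spurious'' case is precisely the heart of the theorem, and the Artin-approximation plan cannot be made to work as stated. First, Artin approximation produces approximate solutions of a fixed system of polynomial equations; you have not exhibited such a system, and the condition ``$\wh{\q}$ is a minimal prime of $(\mathfrak{p},f_1,\ldots,f_{d-1-h})\wh A$'' is not of this shape. Second, and more decisively, the desired conclusion is false at the level of cycles: take $\mathfrak{p}=\wh{\q}\cap A=(0)$ (which occurs, e.g.\ for $d=2$, since the generic formal fibre of a non-complete excellent Henselian domain of dimension $\geq 2$ has positive dimension, and such a fibre is regular, so $\wh\q\in\wh U$). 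If $\mathfrak{p}'\subset A$ has height $d-1$ and $\wh{\q}$ is a minimal prime of $\mathfrak{p}'\wh A$, then $\mathfrak{p}'=(\mathfrak{p}'\wh A)\cap A\subseteq\wh{\q}\cap A=(0)$, a contradiction. So no amount of ``iterating the approximation'' will produce a prime $\mathfrak{p}'$ of $A$ whose pullback contains $\wh{\q}$ as a component; one must instead show that $[\wh A/\wh{\q}]$ is only \emph{rationally} equivalent (after tensoring with $\Q$) to something in the image, and your plan gives no mechanism for that.

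The paper's mechanism is exactly what is missing: for $Q$ of height $d-1$ with $\wh A_Q$ regular, the MCM module $\Syz^{\wh A}_d(\wh A/Q)$ is free on the punctured spectrum of $\wh A$, and Elkik's theorem (descent of modules locally free off $V(\m)$ along the Henselian pair $(A,\m)\to(\wh A,\wh\m)$, cite \cite[Theorem 3]{E}) produces an MCM $A$-module $Y$ with $\wh Y\cong\Syz^{\wh A}_d(\wh A/Q)$. Combined with the exact sequence $0\to\Z\to G(\cdot)\to G(\CMS(\cdot))\to 0$ and a diagram chase in the graded pieces $G_i(\cdot)_\Q\cong A_i(\cdot)_\Q$, this shows $[\wh A/Q]$ lies in the image of $\eta_\Q$ regardless of what $Q$ contracts to. Your $\beta$-surjectivity step needs to be replaced by an argument of this kind; without Elkik descent (or something of comparable strength) the proposal does not close.
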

As  easy corollaries we obtain
\begin{corollary}
  \label{chow-2}
Let $(A,\m)$ be an excellent Henselian, Gorenstein ring of dimension $2$. Assume $A$ is a quotient of a regular local ring $T$. Assume that the completion of
$A$ is a domain. Then $G(A)_\Q \cong G(\wh{A})_\Q$.
\end{corollary}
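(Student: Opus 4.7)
The plan is to reduce the statement to Theorem~\ref{chow-1} via the singular Riemann-Roch isomorphism, and then handle the remaining graded pieces $A_0$ and $A_2$ of the Chow group by direct inspection.

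First I would verify that the hypotheses of Theorem~\ref{chow-1} are in force. Since the map $A \rt \wh{A}$ is faithfully flat and $\wh{A}$ is a domain by hypothesis, $A$ itself is a domain. In particular $A$ satisfies $R_0 = R_{d-2}$ (with $d = 2$). Therefore Theorem~\ref{chow-1} applies, and the natural map $A_1(A)_\Q \rt A_1(\wh{A})_\Q$ is an isomorphism.

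Next I would invoke singular Riemann-Roch. Because $A$ is a quotient of a regular local ring $T$, and $\wh{A}$ is correspondingly a quotient of $\wh{T}$, we obtain isomorphisms $\tau_{T/A}\colon G(A)_\Q \xrightarrow{\sim} A_*(A)_\Q$ and $\tau_{\wh{T}/\wh{A}}\colon G(\wh{A})_\Q \xrightarrow{\sim} A_*(\wh{A})_\Q$. These identifications intertwine $\eta_\Q$ with the flat pullback on Chow groups induced by completion---the same compatibility already invoked in the paragraph preceding Theorem~\ref{chow-1}. Hence $\eta_\Q$ is an isomorphism iff $A_i(A)_\Q \rt A_i(\wh{A})_\Q$ is an isomorphism for every $i \in \{0,1,2\}$.

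Finally I would dispose of degrees $0$ and $2$. For $i = 0$, both sides vanish by the general remark preceding Theorem~\ref{chow-1}. For $i = 2 = \dim A$, since $A$ and $\wh{A}$ are both two-dimensional domains there is a single minimal prime of top dimension on each side, so $A_2(A)_\Q = \Q\cdot[A]$ and $A_2(\wh{A})_\Q = \Q\cdot[\wh{A}]$, and the flat pullback sends $[A]$ to $[\wh{A}]$, which is plainly an isomorphism. Combined with Theorem~\ref{chow-1} in degree $1$, this yields an isomorphism $A_*(A)_\Q \cong A_*(\wh{A})_\Q$, and Riemann-Roch transports this to $G(A)_\Q \cong G(\wh{A})_\Q$. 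The genuine content---control of the codimension-one piece---is absorbed into Theorem~\ref{chow-1}, so the main obstacle here has already been overcome; what remains is bookkeeping together with the standard Fulton compatibility of $\tau$ with flat pullback.
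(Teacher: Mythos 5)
Your proof is correct and follows essentially the same route as the paper's: reduce via singular Riemann--Roch to the graded Chow pieces, dispose of degrees $0$ and $2$ directly (both sides are $0$ and $\Q$ respectively, since $A$ and $\wh A$ are domains), and invoke Theorem~\ref{chow-1} for degree $1$. The only thing you add is the explicit verification that $R_0$ holds because $A$ is a domain, which the paper leaves implicit.
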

If $B$ is a normal domain then let $C(B)$ denote its class group. If $A$ is an excellent normal local domain then we have a natural map $\xi \colon C(A) \rt  C(\wh{A})$ which is injective.
\begin{corollary}
  \label{chow-3}
Let $(A,\m)$ be an excellent Henselian, Gorenstein normal domain of dimension $3$. Assume $A$ is a quotient of a regular local ring $T$.
Then the following are equivalent
\begin{enumerate}[\rm (1)]
  \item  $\eta_\Q \colon G(A)_\Q \rt G(\wh{A})_\Q$ is an isomorphism.
  \item  $\xi_\Q \colon C(A)_\Q \rt C(\wh{A})_\Q$ is an isomorphism.
\end{enumerate}
\end{corollary}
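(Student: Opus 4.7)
The plan is to invoke singular Riemann--Roch to transport the question about $\eta_\Q$ to the Chow group, and then isolate the graded piece that encodes the divisor class group. Since $A$ is a quotient of an excellent regular local ring $T$, so is $\wh{A}$ (being a quotient of $\wh{T}$), and Riemann--Roch gives isomorphisms $\tau_A \colon G(A)_\Q \rt A_*(A)_\Q$ and $\tau_{\wh{A}} \colon G(\wh{A})_\Q \rt A_*(\wh{A})_\Q$. The ring map $A \rt \wh{A}$ is flat and induces flat pullback on Chow groups; by the covariance of $\tau$ under flat pullback \cite[Ch.~20]{F}, the resulting square commutes. Therefore $\eta_\Q$ is an isomorphism if and only if the flat pullback $A_i(A)_\Q \rt A_i(\wh{A})_\Q$ is an isomorphism for each $i = 0, 1, 2, 3$.

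Next, I would eliminate the pieces other than $i = 2$. The vanishing $A_0(A)_\Q = 0$ noted just before Theorem \ref{chow-1} handles $i = 0$ trivially. Since $A$ is a domain, $A_3(A)_\Q = \Q \cdot [A]$; because $A$ is excellent Henselian normal, $\wh{A}$ is normal (by excellence) and hence a normal domain (as any local normal Noetherian ring is a domain), so $A_3(\wh{A})_\Q = \Q \cdot [\wh{A}]$ and the pullback sends $[A]$ to $[\wh{A}]$, handling $i = 3$. For $i = 1 = d-2$, Theorem \ref{chow-1} applies: $A$ satisfies $R_1$ because it is normal, and we have just observed that $\wh{A}$ is a domain; hence $A_1(A)_\Q \rt A_1(\wh{A})_\Q$ is an isomorphism unconditionally.

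Thus $\eta_\Q$ is an isomorphism if and only if $A_2(A)_\Q \rt A_2(\wh{A})_\Q$ is. To conclude, I would invoke the standard identification, valid for any normal Noetherian local domain $R$ of dimension $d$, of $A_{d-1}(R)$ with the divisor class group $C(R)$. Taking $R = A$ and $R = \wh{A}$, this gives $A_2(A) \cong C(A)$ and $A_2(\wh{A}) \cong C(\wh{A})$. The main technical point of the proof is checking that this identification is natural for the flat pullback along $A \rt \wh{A}$ and intertwines the Chow-group map with $\xi$; once that is established, the equivalence of (1) and (2) follows.
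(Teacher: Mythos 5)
Your argument is correct and follows the paper's own proof essentially step for step: invoke Riemann--Roch to pass from $G(\cdot)_\Q$ to $A_*(\cdot)_\Q$, dispose of the graded pieces $i=0,3$ trivially, invoke Theorem \ref{chow-1} for $i=1$, and identify $A_2(\cdot)_\Q$ with $C(\cdot)_\Q$ (for which the paper simply cites \cite[2.3]{KK}, covering the naturality point you flag as the remaining technical step).
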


We now describe in brief the contents of this paper. In section two we discuss some preliminaries  that we need. In section three we prove Theorem \ref{main-cat}.
In section four we give proofs of Theorem \ref{main} and Theorem \ref{main-st}.
 In section five we give a proof of Theorem \ref{sub-ver}. In section six we prove Theorem \ref{chow-1}. Finally in section seven we give some examples which illustrates our results.
\section{Preliminaries}
We use \cite{N} for notation on triangulated categories. However we will assume that if $\C$ is a triangulated category then $\Hom_\C(X, Y)$ is a set for any objects $X, Y$ of $\C$.
 In this section $\C, \D$ are skeletally small triangulated category.

 \s \label{sub-tri} Let $\C$ be a triangulated category with shift functor $[1]$. A full subcategory $\D$ of $\C$ is called a \emph{triangulated subcategory} of $\C$ if
 \begin{enumerate}
   \item  $X \in \D$ then $X[1], X[-1]\in \D$.
   \item If $X \rt Y \rt Z \rt X[1]$ is a triangle in $\C$ then if $X, Y \in \D$ then so does $Z$.
   \item If $X \cong Y$ in $\C$ and $Y \in \D$ then $X \in \D$.
 \end{enumerate}

  \s A triangulated subcategory $\D$ of $\C$ is said to be \emph{thick} if $U \oplus V \in \D$ then $U, V \in \D$.
  A triangulated subcategory $\D$ of $\C$ is \emph{dense} if for any $U \in \C$ there exists $V \in \C$  such that $U \oplus V  \in \D$.
Note a triangulated subcategory $\D$ is dense in $\C$ if and only if the smallest thick subcategory of $\C$ containing $\D$ is $\C$.

\s \label{Groth} The Grothendieck group $G(\C)$ is the quotient group of the free abelian group
on the set of isomorphism classes of objects of $\C$  by the Euler relations: $[V] =
[U] + [W]$ whenever there is an exact triangle in $\C$
$$U \rt V \rt W \rt U[1]. $$
As $[U[1]] = -[U]$ in $G(\C)$, it follows that any element of $G(\C)$ is of the form $[V]$ for some $V \in \C$.

\s \label{Verd} Let $\C$ be skeletally small triangulated category and let $\D$ be a thick subcategory of $\C$. Set $\T = \C/\D$  be the Verdier quotient. There exists an exact sequence (see \cite[VIII, 3.1]{I}),
$$ G_0(\D) \rt G_0(\C) \rt G_0(\T) \rt 0. $$

 \emph{Some generalities on maps on Grothendieck groups of triangulated categories induced by exact functors}:\\
Let $\C, \D$ be triangulated categories.
\s
A triangulated  functor $F\colon \C\rt \D$ is called an \emph{equivalence up to direct summand's}
if it is fully faithful and any object $X \in \D$  is isomorphic to a direct summand of $F(Y)$ for
some $Y \in \C$.

\s \label{wc} We say $\C$ has \textit{weak cancellation}, if for $U, V, W \in \C$ we have
$$ U \oplus V \cong  U \oplus W \implies V \cong W. $$

The following results were proved in \cite{P}.
\begin{theorem}
\label{b-1} (see \cite[2.1]{P}). Let $\phi \colon \C \rt \D$ be a triangulated functor which is an  equivalence upto direct summands. Then the natural map $G(\phi) \colon G(\C) \rt G(\D)$ is injective.
\end{theorem}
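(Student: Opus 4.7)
The plan is to factor $\phi$ through its essential image and reduce the assertion to Thomason's theorem on dense subcategories. Let $\mathcal{E}$ be the full subcategory of $\D$ consisting of objects isomorphic to some $\phi(X)$, $X \in \C$. The first step is to verify that $\mathcal{E}$ is a triangulated subcategory of $\D$ in the sense of \ref{sub-tri}: closure under the shift is immediate from $\phi \circ [1] \cong [1] \circ \phi$, closure under isomorphism is built into the definition, and closure under cones follows from full faithfulness of $\phi$, since any morphism between objects of $\mathcal{E}$ lifts (after composing with isomorphisms) to a morphism $g$ in $\C$, and the cone of $\phi(g)$ in $\D$ is isomorphic to $\phi(\cone g)$ because $\phi$ is triangulated.

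The second step is to observe that $\mathcal{E}$ is dense in $\D$: by hypothesis every $U \in \D$ is a direct summand of $\phi(Y)$ for some $Y \in \C$, so there is $V \in \D$ with $U \oplus V \cong \phi(Y) \in \mathcal{E}$. The corestriction $\bar\phi \colon \C \rt \mathcal{E}$ is fully faithful and essentially surjective, hence a triangulated equivalence; consequently $G(\bar\phi) \colon G(\C) \rt G(\mathcal{E})$ is an isomorphism (an equivalence carries the generators and Euler relations of one group bijectively onto those of the other).

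The final step is to invoke Thomason's theorem \cite{T}: for any dense triangulated subcategory $\mathcal{E}$ of $\D$, the natural map $G(\mathcal{E}) \rt G(\D)$ is injective, with image the subgroup $\{[U] \in G(\D) : U \in \mathcal{E}\}$. Factoring $G(\phi)$ as $G(\mathcal{E} \hookrightarrow \D) \circ G(\bar\phi)$ expresses it as an isomorphism followed by an injection, proving injectivity.

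The main technical point is verifying closure of $\mathcal{E}$ under cones; the rest is formal once we are willing to cite Thomason's theorem as a black box. There is no analytic or computational obstacle — the whole argument is a reduction to Thomason via full faithfulness and the summand-surjectivity hypothesis.
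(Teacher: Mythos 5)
Your argument is correct. The paper does not reproduce the proof of Theorem~\ref{b-1} (it cites \cite[2.1]{P}), but the strategy you use --- factor $\phi$ through its essential image $\mathcal{E}$, check that $\mathcal{E}$ is a strictly full triangulated subcategory of $\D$, observe that the summand-surjectivity hypothesis says exactly that $\mathcal{E}$ is dense in $\D$, note that the corestriction $\C \to \mathcal{E}$ is a triangulated equivalence and hence induces an isomorphism on Grothendieck groups, and then invoke Thomason's result that $G(\mathcal{E}) \to G(\D)$ is injective when $\mathcal{E}$ is dense --- is precisely the machinery the paper itself sets up and leans on (\ref{T-const} recalls Thomason's correspondence and the injectivity of $G(\D) \to G(\C)$ for $\D$ dense). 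Your verification that $\mathcal{E}$ is closed under cones is also the one nontrivial point, and you handle it correctly: lift a morphism between objects of $\mathcal{E}$ to $\C$ via full faithfulness, complete to a triangle there, and use that $\phi$ is exact plus the standard fact that two triangles over the same base morphism with isomorphic first two vertices are isomorphic. One small remark: when you assert that the equivalence $\bar\phi$ induces an isomorphism $G(\bar\phi)$, the hidden content is that every triangle in $\mathcal{E}$ (i.e., every triangle of $\D$ with vertices in $\mathcal{E}$) is, up to isomorphism, the image under $\phi$ of a triangle in $\C$; this follows by exactly the same lifting argument you used for cone-closure, so the Euler relations of $G(\mathcal{E})$ really do correspond bijectively to those of $G(\C)$. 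With that spelled out, the proof is complete.
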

We also proved
\begin{theorem}
\label{b-onto}(see \cite[2.2]{P}).
(with hypotheses as in Theorem \ref{b-1}). Also assume $\D$ satisfies weak cancellation.
If $[V] \in \image G(\phi)$ then there exists $W \in \C$ with $\phi(W) \cong V$.
\end{theorem}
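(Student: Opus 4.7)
The plan is to realize $\image G(\phi)$ as the image in $G(\D)$ of a specific dense triangulated subcategory $\D'$ of $\D$, namely (the closure under isomorphism of) the essential image of $\phi$, and then invoke Thomason's classification \cite{T} of dense triangulated subcategories in terms of subgroups of the Grothendieck group.

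First I would introduce $\D' \subseteq \D$, the full subcategory of all objects $V \in \D$ that are isomorphic in $\D$ to $\phi(W)$ for some $W \in \C$. I need to check that $\D'$ is a triangulated subcategory of $\D$. Closure under $[1]$ and $[-1]$ is immediate since $\phi$ commutes with shifts. For closure under extensions, suppose $V_1 \rt V_2 \rt V_3 \rt V_1[1]$ is a triangle in $\D$ with $V_1, V_2 \in \D'$. Fix isomorphisms $V_i \cong \phi(W_i)$ for $i=1,2$; since $\phi$ is fully faithful, the induced map $\phi(W_1) \rt \phi(W_2)$ equals $\phi(f)$ for a unique $f \colon W_1 \rt W_2$ in $\C$. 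Complete $f$ to a triangle $W_1 \rt W_2 \rt W_3 \rt W_1[1]$ in $\C$ and apply $\phi$. By axiom TR3, this yields an isomorphism of triangles identifying $V_3 \cong \phi(W_3)$, so $V_3 \in \D'$. Finally $\D'$ is dense in $\D$ by the equivalence-up-to-direct-summands hypothesis.

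Next I would observe that the induced functor $\phi \colon \C \rt \D'$ is fully faithful and essentially surjective, hence an equivalence of triangulated categories, so $G(\phi) \colon G(\C) \rt G(\D')$ is an isomorphism. The composition $G(\C) \xrightarrow{\sim} G(\D') \rt G(\D)$ is exactly the original $G(\phi)$, and so $\image G(\phi)$ equals the image of the inclusion-induced $G(\D') \rt G(\D)$. Thomason's theorem, applied to the dense triangulated subcategory $\D' \subseteq \D$, gives that this map is injective (which also recovers Theorem \ref{b-1}) and that $\D' = \{ V \in \D \mid [V] \in \image(G(\D') \rt G(\D)) \}$. Therefore $[V] \in \image G(\phi)$ forces $V \in \D'$, i.e. $V \cong \phi(W)$ for some $W \in \C$, as required.

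This Thomason-based route does not seem to use the weak cancellation hypothesis on $\D$. A more hands-on route would use it directly: starting from $[V] = [\phi(X)]$ in $G(\D)$ for some $X \in \C$, produce $P \in \D$ with $V \oplus P \cong \phi(X) \oplus P$ from the Grothendieck-group relation; enlarge $P$ to the essential image of $\phi$ by choosing $P'$ with $P \oplus P' \cong \phi(R)$, giving $V \oplus \phi(R) \cong \phi(X) \oplus \phi(R) \cong \phi(X \oplus R)$; then cancel $\phi(R)$ via weak cancellation to conclude $V \cong \phi(X)$. The main obstacle in this alternative is the first step of extracting a concrete $P$ with $V \oplus P \cong \phi(X) \oplus P$ from the abstract equality in the Grothendieck group, which requires a careful unwinding of the Euler relations; by contrast, the Thomason route packages that difficulty into an appeal to \cite{T}.
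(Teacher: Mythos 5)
The paper does not actually give an argument for this statement; it is quoted verbatim from the author's earlier work (\cite[2.2]{P}), so there is no in-text proof to compare your proposal against. On its own terms, your Thomason route is correct. The strict essential image $\D'$ of $\phi$ is indeed a strictly full triangulated subcategory of $\D$ (your TR3 argument for closure under cones is fine), it is dense by the equivalence-up-to-direct-summands hypothesis, and the induced $\phi\colon \C \rt \D'$ is an equivalence, so $\image G(\phi) = \image\bigl(G(\D')\rt G(\D)\bigr)$. Thomason's bijection then says $\D'$ is recovered from this subgroup as $\{V \in \D : [V]\in \image G(\phi)\}$, which is precisely the conclusion. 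You are right that this bypasses the weak-cancellation hypothesis entirely. Either that hypothesis in \cite[2.2]{P} is superfluous (and was perhaps retained there because it holds automatically in the Krull--Schmidt categories $\CMS(\wh{A})$ to which it is applied), or \cite{P} uses a different argument that needs it; in the latter case your derivation is the cleaner one.

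Your second, "hands-on" sketch does contain a real gap, exactly at the point you flag, and it is worth being explicit about why it fails. In a triangulated category, $[V]=[\phi(X)]$ in $G(\D)$ does \emph{not} give $V\oplus P \cong \phi(X)\oplus P$ for some $P$: the triangulated Grothendieck group is a quotient of the split Grothendieck group $K_0^{\oplus}$ by all Euler relations, not only the split ones, so ``stable isomorphism under $\oplus$'' is the wrong model. The standard counterexample is $[U\oplus U[1]] = 0 = [0]$ for any nonzero $U$, yet $U\oplus U[1]\oplus P \not\cong P$. The correct substitute (Thomason's lemma) is that $[A]=[B]$ iff there exist $C,D,E$ and triangles $C\rt D \rt A\oplus E\rt C[1]$ and $C\rt D\rt B\oplus E\rt C[1]$; unwinding that together with weak cancellation would amount to reproving the relevant portion of \cite{T}. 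So deferring to Thomason, as your primary route does, is the right move, and the hands-on route should not be presented as a self-contained alternative.
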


\s \label{chow}  We  define the Chow group of a ring and rational equivalence. Let
$Z_i(A)$  be the free Abelian group generated by prime ideals of dimension i.
The dimension of a prime ideal $P$ is defined to be the dimension of $A/P$
as an $A$-module. If $P$ is a prime ideal with $\dim A/P = i$, we denote the
generator in $Z_i(A)$. corresponding to $P$ by $[\Spec A/P]$. The group of cycles
of A is defined by $Z_*(A) = \bigoplus_{i \geq 0} Z_i(A)$.
Let $Q$ be a prime ideal of dimension $i + 1$ and let $x$ be an element not
in $Q$. Define
\[
div(Q, x) = \sum_{\dim A/P = i}\ell_{A_P}((A/Q)/x(A/Q)_P)[\Spec A/P]
\]
Let $Rat_i(A)$ be the free Abelian subgroup of $Z_i(A)$. generated by all cycles
of the form $div(Q, x)$ with $Q$ a prime ideal of dimension $i + 1$ and $x$ not in
$Q$. Two cycles are rationally equivalent if their difference lies in $Rat_i(A)$..
This equivalence relation is called rational equivalence.
Denote $Z_i(A)/Rat_i(A)  = A_i(A)$ the $i^{th}$ Chow group of $A$. The Chow group of $A$ is the direct sum of all groups
$A_i(A)$. and is denoted by $A_*(A)$.
Let $[\Spec A/P]$ also denote the generator of $A_i(A)$ corresponding to the prime ideal $P$.
\section{Proof of Theorem \ref{main-cat}}
In this section we prove Theorem \ref{main-cat}.

\s\label{T-const} (with setup as in \ref{Groth}). Thomason \cite[2.1]{T} constructs a one-to-one correspondence between dense subcategories of $\C$ and subgroups of $G_0(\C)$ as follows:

To $\D$ a dense subcategory of $\C$ corresponds the subgroup which is the image of $G_0(\D)$ in $G_0(\C)$. To  $H$ a subgroup of $G_0(\C)$
corresponds the full subcategory $\D_H$ whose objects are those $X$ in $\C$ such that
$[X] \in H$.

If $\D$ is a dense subcategory of $\C$ then the natural map $G(\D) \rt G(\C)$ is injective, see \cite[2.3]{T}.

\s\label{elt-Q} Let $e \in G(\C)_\Q$ then $e =\frac{1}{m}[U] $ for some $m \geq 1$ and $U \in \C$. To see this we note that
$$ e = \frac{a_1}{b_1}[U_1] + \cdots + \frac{a_s}{b_s}[U_s] \quad \text{for some $U_i \in \C$ and $a_i \in \Z$ non-zero and $b_i \geq 1$}.$$
Let $m$ be the l.c.m of the $b_i$.
Then
\begin{align*}
 e &= \frac{1}{m}\left( a_1^\prime [U_1] + \cdots + a_s^\prime[U_s] \right), \\
  &= \frac{1}{m}\left(  [V_1] + \cdots + [V_s] \right),   \  \text{here $V_i = U_i^{a_i^\prime}$ if $a_i^\prime > 0$ and $V_i = U_i[-1]^{-a_i^\prime}$ otherwise}, \\
   &= \frac{1}{m}[V] \quad \text{where $V = V_1\oplus \cdots \oplus V_s$}.
\end{align*}

\begin{lemma}\label{exist}
Let $H$ be a $\Q$-subspace of $G(\C)_\Q$. Then there exists a dense subcategory of $\C$ with $G(\D)_\Q = H$.
\end{lemma}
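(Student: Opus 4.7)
The plan is to pull $H$ back to a subgroup of $G(\C)$ via the canonical map and then apply Thomason's correspondence \ref{T-const}. Let $\pi \colon G(\C) \rt G(\C)_\Q$ denote the natural map and set $H' = \pi^{-1}(H)$; since $H$ is a $\Q$-subspace of $G(\C)_\Q$, it is in particular a subgroup, so $H'$ is a subgroup of $G(\C)$. By Thomason, the full subcategory
$$\D := \{\, X \in \C \mid [X] \in H' \,\}$$
is a dense subcategory of $\C$, and the natural map $\delta^\D \colon G(\D) \rt G(\C)$ is injective with image exactly $H'$.

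Tensoring the injection $G(\D) \hookrightarrow G(\C)$ with the flat $\Z$-module $\Q$ yields the injection $\delta^\D_\Q \colon G(\D)_\Q \rt G(\C)_\Q$. One containment is for free: every element of $G(\D)$ maps into $H'$ under $\delta^\D$, hence into $\pi(H') \subseteq H$ in $G(\C)_\Q$, and since $H$ is a $\Q$-subspace of $G(\C)_\Q$ it is closed under the $\Q$-combinations formed when tensoring. Thus $\image \delta^\D_\Q \subseteq H$.

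The substantive step---and the one I expect to be the main (if modest) obstacle---is verifying that tensoring with $\Q$ does not undershoot $H$. Here I would invoke \ref{elt-Q} to represent an arbitrary $h \in H$ in the form $h = \tfrac{1}{m}\pi([U])$ with $U \in \C$ and $m \geq 1$. Then $\pi([U]) = mh$ lies in $H$ (as $H$ is a $\Q$-subspace), so $[U] \in H'$, and hence $U \in \D$ by the definition of $\D$. Consequently $[U] \in G(\D)$ and $h = \tfrac{1}{m}\delta^\D_\Q([U])$ lies in the image of $\delta^\D_\Q$, giving $H \subseteq \image \delta^\D_\Q$. Identifying $G(\D)_\Q$ with its image under $\delta^\D_\Q$, we conclude $G(\D)_\Q = H$.

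Summarizing, the proof is a two-step reduction: first replace the $\Q$-subspace $H$ by the honest subgroup $H' = \pi^{-1}(H)$ so that Thomason's theorem produces the correct dense subcategory $\D$, then use the elementary representation \ref{elt-Q} to see that every element of $H$ is already in the rational span of $[U]$'s with $U \in \D$. No further machinery beyond Thomason's classification and \ref{elt-Q} is needed.
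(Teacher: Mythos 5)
Your proof is correct, and it takes a genuinely different and in some ways cleaner route than the paper's. The paper chooses a $\Q$-basis $\{e_\alpha\}$ of $H$, uses \ref{elt-Q} to represent each basis vector as an honest class $[U_\alpha]$, and lets $K$ be the $\Z$-subgroup of $G(\C)$ generated by these $[U_\alpha]$; Thomason then yields a dense subcategory with $G(\D) = K$ and $K_\Q = H$. You instead pull $H$ back along $\pi\colon G(\C) \rt G(\C)_\Q$ to the subgroup $H' = \pi^{-1}(H)$ and take the Thomason category $\D$ corresponding to $H'$, then verify $\image\,\delta^\D_\Q = H$ using \ref{elt-Q} for the non-trivial containment. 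Both proofs are valid, but they produce different subcategories in $\I(H)$: the paper's $K$ is essentially an arbitrary choice depending on a basis, whereas your $H' = \pi^{-1}(H)$ is canonical. Indeed, since $H$ is a $\Q$-subspace, $G(\C)/H'$ is torsion-free, so by Proposition \ref{rad-quot} your $\D$ is already the unique \emph{radical} dense subcategory $\D_H$ of Theorem \ref{main-cat}. Your construction thus short-circuits a good part of the paper's subsequent Zorn's-lemma argument (\ref{max}, \ref{max-dense}) that is needed to locate the radical representative starting from the paper's non-canonical $\D$. The paper's version has the modest advantage of being a one-line application of Thomason after the basis is chosen, but your version avoids the (choice-dependent) basis and lands directly on the object that matters for the rest of Section 3.
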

\begin{proof}
Let $\{e_\alpha \}_{\alpha \in \Lambda} $ be a basis of $H$ as a $\Q$-vector space.
 By \ref{elt-Q} we have $e_\alpha = \frac{1}{m_\alpha}[U_\alpha]$ for some positive integer $m_\alpha$ and $U_\alpha \in \C$. Set $$v_\alpha = m_\alpha e_\alpha = \frac{1}{1}[U_\alpha]. \quad \text{for all $\alpha \in \Lambda$}. $$
 Then $\{v_\alpha \}_{\alpha \in \Lambda} $ is also a basis of $H$ as a $\Q$-vector space. Let $K$ be the ($\Z$)-subgroup of $G(\C)$ generated by $\{ [U_\alpha] \}_{\alpha \in \Lambda}$. Note that $K$ is free as a $\Z$-module and $K_\Q = H$. Let $\D$ be the dense subcategory of $\C$ corresponding to $K$. Then $G(\D) = K$ and so $G(\D)_\Q = H$.
\end{proof}

\s \label{const} \emph{Construction :} Let $H$ be a $\Q$-subspace of $G(\C)_\Q$.
If $\D$ is a dense subcategory of $\C$ then as we have observed earlier the natural map $\delta^\D \colon G(\D) \rt G(\C)$ is an injection. So $\delta^\D_\Q \colon G(\D)_\Q \rt G(\C)_\Q$ is an injection
Let $$\I(H) = \{ \D \mid \D \ \text{is a dense subcategory of $\C$ such that $\image \delta^\D_\Q = H$} \}.$$
By \ref{exist} we get that $\I(H)$ is non-empty. We define a partial order $\leq$ on $\I(D)$ as $\D \leq \D^\prime$ if $\D \subseteq \D^\prime$.
\begin{lemma}
\label{max}(with hypotheses as in \ref{const}). There exists maximal elements in $\I(H)$ with respect to the partial order $\leq$.
\end{lemma}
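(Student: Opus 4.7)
The plan is to apply Zorn's lemma to the partially ordered set $(\I(H), \leq)$. By Lemma \ref{exist}, $\I(H)$ is nonempty, so it suffices to show that every chain in $\I(H)$ admits an upper bound in $\I(H)$.

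Let $\{\D_i\}_{i \in I}$ be a chain in $\I(H)$ and set $\D = \bigcup_{i \in I} \D_i$, viewed as the full subcategory of $\C$ whose objects are those lying in some $\D_i$. I would first verify that $\D$ is a dense (triangulated) subcategory of $\C$. Closure under $[1]$ and $[-1]$ and under isomorphism is immediate from the corresponding property of each $\D_i$. For the triangle axiom \ref{sub-tri}(2), given a triangle $X \rt Y \rt Z \rt X[1]$ with $X, Y \in \D$, pick indices $i, j$ with $X \in \D_i$, $Y \in \D_j$; since $\{\D_i\}$ is a chain, both lie in a common $\D_k$, which is triangulated, so $Z \in \D_k \subseteq \D$. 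Density follows even more easily: for any $U \in \C$, fix any $i_0 \in I$; since $\D_{i_0}$ is dense, there exists $V \in \C$ with $U \oplus V \in \D_{i_0} \subseteq \D$.

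Next I would show $\image \delta^\D_\Q = H$. The inclusion $H \subseteq \image \delta^\D_\Q$ is immediate from $\D_i \subseteq \D$ for every $i$, since this gives $H = \image \delta^{\D_i}_\Q \subseteq \image \delta^\D_\Q$. For the reverse inclusion, the key observation uses \ref{elt-Q}: any $e \in G(\D)_\Q$ can be written as $e = \frac{1}{m}[V]$ for some $m \geq 1$ and $V \in \D$. By definition of $\D$, there exists $i \in I$ with $V \in \D_i$, so $e$ lies in the image of $G(\D_i)_\Q \rt G(\D)_\Q$, and therefore $\delta^\D_\Q(e) \in \image \delta^{\D_i}_\Q = H$. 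This step is the main (mild) obstacle: without \ref{elt-Q} one would only know that an arbitrary element of $G(\D)_\Q$ is a finite $\Q$-linear combination of classes $[V_j]$ with $V_j \in \D$, and would need the chain condition to force all $V_j$ to lie in a common $\D_k$; the single-generator presentation makes this essentially automatic.

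Combining the two inclusions gives $\D \in \I(H)$, and by construction $\D_i \leq \D$ for every $i$. Thus every chain has an upper bound in $\I(H)$, and Zorn's lemma produces a maximal element, completing the proof.
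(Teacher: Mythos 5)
Your proof is correct and follows essentially the same strategy as the paper's: Zorn's lemma, with the union $\D = \bigcup_i \D_i$ as the upper bound for a chain, reducing to showing $\image \delta^\D_\Q = H$. The paper dresses up the last step with a colimit argument ($\lim_\alpha G(\D_\alpha)_\Q = G(\D)_\Q$ plus the observation that the transition maps are isomorphisms), but the content is the same single-generator observation you use via \ref{elt-Q}: an element of $G(\D)_\Q$ has the form $\frac{1}{m}[V]$ with $V$ in some $\D_i$ of the chain, so its image under $\delta^\D_\Q$ already lies in $H$; your version is, if anything, a bit more direct.
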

\begin{proof}
  The result will follow from Zorn's lemma if we prove every chain in $\I(H)$ has an upper bound in $\I(H)$.

  Let $\{ \D_\alpha \}_{\alpha \in \Lambda}$ be a chain in $\I(H)$. Then it is elementary to see that $\D = \bigcup_{\alpha \in \Lambda}\D_\alpha$ is a dense subcategory of $\C$.

 Claim: $\D \in \I(H)$.

 Note clearly $\D$ is an upper bound of $\{ \D_\alpha \}_{\alpha \in \Lambda}$. So it suffices to prove the claim.

  If $\D_\alpha \subseteq D_\beta $ then note that as $\D_\alpha$ is dense in $\C$ it is also a dense subcategory of $\D_\beta$. As observed before the natural map
  $i^{\D_\alpha}_{\D_\beta} \colon  G(\D_\alpha) \rt G(\D_\beta)$ is an injection. We have a commutative diagram
    \[
\xymatrix{
\
&G(\D_\alpha)
\ar@{->}[dl]_{i^{\D_\alpha}_{\D_\beta}}
\ar@{->}[dr]^{\delta^{\D_\alpha}}
 \\
G(\D_\beta)
\ar@{->}[rr]_{  \delta^{\D_\beta }}
&\
&G(\C)
}
\]
So we have  $\image \delta^{\D_\alpha} \subseteq \image \delta^{\D_\beta}$. It follows that $\bigcup_{\alpha \in \Lambda} \image \delta^{\D_\alpha}$ is a subgroup of $G(\C)$. It is also clearly equal to $\image \delta^\D$. We have
  $\lim_{\alpha \in \Lambda}G(\D_\alpha) = G(\D)$. It follows that $\lim_{\alpha \in \Lambda}G(\D_\alpha)_\Q = G(\D)_\Q$. For $\D_\alpha \subseteq \D_\beta$, we have a commutative diagram
  \[
\xymatrix{
\
&G(\D_\alpha)_\Q
\ar@{->}[dl]_{i^{\D_\alpha}_{\D_\beta}\otimes \Q}
\ar@{->}[dr]^{\delta^{\D_\alpha}_\Q}
 \\
G(\D_\beta)_\Q
\ar@{->}[rr]_{  \delta^{\D_\beta}_\Q }
&\
&G(\C)_\Q
}
\]
All the maps above are injections. As $\image \delta^{\D_\alpha}_\Q = \image \delta^{\D_\beta}_\Q = H$, it follows that $i^{\D_\alpha}_{\D_\beta}\otimes \Q$ is an isomorphism. It follows that $G(\D)_\Q \cong G(\D_\alpha)_\Q$ for all $\alpha \in \Lambda$. It also follows that $\image \delta^\D_\Q = H$. Thus $\D \in \I(H)$.
\end{proof}

\s For definition of radical dense subcategories see \ref{def-rad}. We prove Proposition \ref{sqrt}. For the convenience of the reader we re-state it here.
\begin{proposition}\label{sqrt-body}
Let  $\D$ be a dense subcategory of $\C$. Let
$$\sqrt{\D} = \{ U \in \C \mid U^n \in \D \ \text{for some $n \geq 1$} \}. $$
Then
\begin{enumerate}[\rm (1)]
\item $\D \subseteq \sqrt{\D}$.
  \item $\sqrt{\sqrt{D}} = \sqrt{\D}$.
  \item $\sqrt{\D}$ is a radical dense subcategory of $\C$.
   \item $\D$ is a radical dense subcategory of $\C$ if and only if $\sqrt{\D} = \D$.
\end{enumerate}
\end{proposition}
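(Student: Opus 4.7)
The plan is to verify the four assertions in order. Parts (1), (2) and (4) are essentially formal bookkeeping, so the real work is concentrated in (3), where one must confirm that $\sqrt{\D}$ inherits the structure of a triangulated subcategory of $\C$ and remains dense in it.

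For (1), any $U \in \D$ satisfies $U^1 = U \in \D$, so $U \in \sqrt{\D}$. For (2), if $U \in \sqrt{\sqrt{\D}}$ then $U^n \in \sqrt{\D}$ for some $n \geq 1$, and hence $(U^n)^m = U^{nm} \in \D$ for some $m \geq 1$, giving $U \in \sqrt{\D}$; the reverse inclusion is (1). For (4), the forward direction says: if $\D$ is radical and $U \in \sqrt{\D}$, then $U^n \in \D$ forces $U \in \D$, so $\sqrt{\D} \subseteq \D$, and the other inclusion is (1). Conversely, if $\sqrt{\D} = \D$ and $U^n \in \D$ then by definition $U \in \sqrt{\D} = \D$.

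The substance of the proposition lies in (3). I would check the three axioms of \ref{sub-tri} together with density and the radical property. Closure under shifts is clear since $(U[\pm 1])^n = U^n [\pm 1]$ and $\D$ itself is stable under shifts. Closure under isomorphism is equally clear, since $U \cong V$ implies $U^n \cong V^n$. The key step is closure under cones. Given a triangle $X \rt Y \rt Z \rt X[1]$ in $\C$ with $X, Y \in \sqrt{\D}$, pick $n, m \geq 1$ with $X^n, Y^m \in \D$ and set $k = nm$. Taking the $k$-fold direct sum of the triangle yields a triangle $X^k \rt Y^k \rt Z^k \rt X^k[1]$ in $\C$. Now $\D$, being a triangulated subcategory, is closed under finite direct sums (via the split triangle $A \rt A \oplus B \rt B \rt A[1]$), so $X^k = (X^n)^m \in \D$ and similarly $Y^k \in \D$; the triangulated structure of $\D$ then forces $Z^k \in \D$, whence $Z \in \sqrt{\D}$. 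Density of $\sqrt{\D}$ is inherited directly from that of $\D$: for any $U \in \C$ there already exists $V \in \C$ with $U \oplus V \in \D \subseteq \sqrt{\D}$. Finally the radical property follows by construction, since $U^n \in \sqrt{\D}$ implies $(U^n)^p = U^{np} \in \D$ for some $p \geq 1$, and so $U \in \sqrt{\D}$.

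The only place where care is needed is in choosing the common exponent $k = nm$ when closing up under cones, so that both $X^k$ and $Y^k$ land in $\D$ simultaneously; beyond that, everything reduces to the fact that $\D$ itself is closed under finite direct sums and under the triangulated operations.
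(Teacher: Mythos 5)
Your proof is correct and follows essentially the same approach as the paper's: parts (1), (2), (4) are routine, and for (3) you pass to a common exponent $k = nm$ and take the $k$-fold direct sum of the given triangle (which is again a triangle) to conclude $Z^k \in \D$, exactly as the paper does via Neeman's Remark 1.2.1 with the diagonal map. The small extra remarks you add (closure of $\D$ under finite direct sums, density inherited from $\D \subseteq \sqrt{\D}$) are all correct and match the paper's reasoning.
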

\begin{proof}
  (1) This is clear.

  (2) Let $U \in \sqrt{\sqrt{D}}$. Then $U^m \in \sqrt{\D}$. It follows that $(U^m)^s \in \D$. So $U^{ms} \in \D$. Thus $U \in \sqrt{\D}$.

  (3) We first prove $\sqrt{\D}$ is a triangulated subcategory of $\C$. The conditions (1) and (3) of \ref{sub-tri} are trivially satisfied.
  Let $U \xrightarrow{f} V \rt W \rt U[1]$ be a triangle in $\C$ with $U, V \in \sqrt{\D}$. We have to prove $W \in \sqrt{\C}$.
  As $U, V \in \sqrt{\D}$ we get $U^m, V^n \in \D$ for some $m, n$. Then $U^r, V^r \in \D$ (here $r = mn$). Consider the map
  \[
  U^r \xrightarrow{ \phi = \begin{bmatrix}
                      f & 0 & 0 & \cdots & 0 \\
                      0 & f & 0 & \cdots & 0 \\
                      \cdots & \cdots & \cdots & \cdots & \cdots \\
                      0 & 0 & 0 & \cdots & f
                    \end{bmatrix}}
                    V^r.
  \]
  By \cite[1.2.1]{N} we have a triangle
  \[
  U^r \xrightarrow{\phi} V^r \rt W^r \rt U^r[1].
  \]
  It follows that $W^r \in \D$. So $W \in \sqrt{\D}$. Thus $\sqrt{\D}$ is a triangulated sub-category of $\C$.

  As $\D$ is dense in $\C$ and as $\D \subseteq \sqrt{\D}$ it follows that $\sqrt{\D}$ is dense in $\C$.

  (4) If $\sqrt{\D} = \D$ then by (3) we have that $\D$ is a radical dense subcategory of $\C$.
  Conversely if $\D$ is a radical dense subcategory of $\C$ then by definition $\sqrt{\D} = \D$.
\end{proof}
The following result is significant in our analysis of subspaces of $G(\C)_\Q$.
\begin{lemma}\label{equal}
(with hypotheses as in \ref{sqrt-body}) The natural map $G(\D)_\Q \rt G(\sqrt{\D})_\Q$ is an isomorphism.
\end{lemma}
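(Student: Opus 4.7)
The plan is to establish both injectivity and surjectivity of the natural map $\iota\colon G(\D)_\Q \to G(\sqrt{\D})_\Q$ induced by the inclusion $\D \subseteq \sqrt{\D}$ (which is part (1) of Proposition \ref{sqrt-body}).

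For injectivity, I would exploit Thomason's theorem as recalled in \ref{T-const}. Both $\D$ and $\sqrt{\D}$ are dense in $\C$: density of $\D$ is hypothesis, and density of $\sqrt{\D}$ is part (3) of Proposition \ref{sqrt-body}. Hence the canonical maps $\delta^\D\colon G(\D) \to G(\C)$ and $\delta^{\sqrt{\D}}\colon G(\sqrt{\D}) \to G(\C)$ are both injective, and they factor as $\delta^\D = \delta^{\sqrt{\D}} \circ j$, where $j\colon G(\D) \to G(\sqrt{\D})$ is the natural map. Since the composition is injective, $j$ is injective. Tensoring with the flat $\Z$-module $\Q$ preserves injectivity, so $\iota = j\otimes_\Z \Q$ is injective.

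For surjectivity, I would use the normal form for elements of $G(\sqrt{\D})_\Q$ recorded in \ref{elt-Q}: an arbitrary element can be written as $\frac{1}{m}[U]$ for some $m \geq 1$ and some $U \in \sqrt{\D}$. By the definition of $\sqrt{\D}$, there exists $n \geq 1$ with $U^n \in \D$. In $G(\sqrt{\D})$ the relation $[U^n] = n[U]$ holds (this is an immediate consequence of the additivity of $[\,\cdot\,]$ on direct sums, which is itself built from split triangles). Therefore in $G(\sqrt{\D})_\Q$ we have
\[
\frac{1}{m}[U] \;=\; \frac{1}{mn}[U^n],
\]
and $[U^n]$, with $U^n \in \D$, lies in the image of $\iota$. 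Hence $\iota$ is surjective.

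There is no real obstacle here; the only point worth checking carefully is that $[U^n] = n[U]$ in $G(\sqrt{\D})$, which is routine from the existence of the split exact triangle $U \to U\oplus V \to V \to U[1]$ for any $U,V$. Combining the two steps gives the isomorphism $G(\D)_\Q \cong G(\sqrt{\D})_\Q$, which will be crucial later to show that replacing $\D$ by $\sqrt{\D}$ does not change the image in $G(\C)_\Q$, i.e.\ $\image \delta^\D_\Q = \image \delta^{\sqrt{\D}}_\Q$.
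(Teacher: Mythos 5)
Your proof is correct and takes essentially the same approach as the paper: injectivity comes from Thomason's result that dense subcategories induce injective maps on Grothendieck groups, and surjectivity from the normal form $\frac{1}{m}[U]$ of \ref{elt-Q} together with $U^n \in \D$. The only cosmetic difference is that for injectivity the paper notes that $\D$ is dense in $\sqrt{\D}$ and applies Thomason there directly, whereas you deduce injectivity of $j\colon G(\D)\to G(\sqrt{\D})$ from the factorization $\delta^\D = \delta^{\sqrt{\D}}\circ j$ and injectivity of $\delta^\D$; both are immediate and equivalent.
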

\begin{proof}
  As $\D$ is dense in $\C$ it is trivially dense in $\sqrt{\D}$. So the natural map $i \colon G(\D) \rt G(\sqrt{\D})$ is an injection, Thus $i_\Q$ is injective.
  Let $\xi \in G(\sqrt{D})$. Then by \ref{elt-Q} we have $\xi = \frac{1}{m}[U]$ for some $U \in \sqrt{\D}$ and $m \geq 1$. As $U\in \sqrt{\D}$ we get that $U^r \in \D$ for some $r \geq 1$.
Observe that
$$ i_\Q(\frac{1}{mr}[U^r]) = \xi. $$
Thus $i_\Q$ is surjective. The result follows.
\end{proof}

As a consequence of \ref{equal} we obtain
\begin{lemma}
\label{max-dense}(with hypotheses as in \ref{const}). Then maximal elements in $\I(H)$ with respect to the partial order $\leq$ are radical dense subcategories.
\end{lemma}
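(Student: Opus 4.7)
The plan is to use Lemma \ref{equal} together with Proposition \ref{sqrt-body} to show that if $\D$ is maximal in $\I(H)$, then passing to $\sqrt{\D}$ produces another element of $\I(H)$ containing $\D$, forcing $\D = \sqrt{\D}$.

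More precisely, let $\D$ be a maximal element of $\I(H)$. First I would observe that $\sqrt{\D}$ is a dense subcategory of $\C$ by Proposition \ref{sqrt-body}(3), and that $\D \subseteq \sqrt{\D}$ by Proposition \ref{sqrt-body}(1). The inclusions $\D \hookrightarrow \sqrt{\D} \hookrightarrow \C$ give a commutative triangle of natural maps on Grothendieck groups, and tensoring with $\Q$ yields the diagram
\[
\xymatrix{
G(\D)_\Q \ar[r]^{i_\Q} \ar[dr]_{\delta^\D_\Q} & G(\sqrt{\D})_\Q \ar[d]^{\delta^{\sqrt{\D}}_\Q} \\
& G(\C)_\Q
}
\]
By Lemma \ref{equal} the horizontal map $i_\Q$ is an isomorphism, so $\image \delta^{\sqrt{\D}}_\Q = \image \delta^\D_\Q = H$. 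Hence $\sqrt{\D} \in \I(H)$.

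Since $\D \subseteq \sqrt{\D}$ and both lie in $\I(H)$, maximality of $\D$ forces $\D = \sqrt{\D}$. By Proposition \ref{sqrt-body}(4) this means exactly that $\D$ is a radical dense subcategory of $\C$, which completes the proof.

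There is no real obstacle here; the argument is a short deduction once Lemma \ref{equal} and Proposition \ref{sqrt-body} are in place. The only point to double-check is that one must verify $\sqrt{\D}$ actually belongs to $\I(H)$ (not merely that it is dense in $\C$), and this is exactly what Lemma \ref{equal} provides via the commutative diagram above.
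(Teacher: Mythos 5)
Your argument is correct and is exactly the intended deduction: the paper presents Lemma \ref{max-dense} as an immediate consequence of Lemma \ref{equal}, and your proof (pass to $\sqrt{\D}$, use \ref{equal} to see $\sqrt{\D}\in\I(H)$, invoke maximality to get $\D=\sqrt{\D}$, then apply Proposition \ref{sqrt-body}(4)) is precisely the reasoning the paper leaves implicit.
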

The following trivial observation is crucial.
\begin{lemma}
\label{torsion} Let $\C$ be an essentially small triangulated category and let $\D$ be a radical dense subcategory of $\C$. If $[U] \in G(\C)$ is a torsion element then $U \in \D$.
\end{lemma}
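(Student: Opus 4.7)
My plan is to combine Thomason's correspondence (recalled in \ref{T-const}) between dense subcategories of $\C$ and subgroups of $G(\C)$ with the radical condition on $\D$. Let $H = \image(\delta^\D \colon G(\D) \rt G(\C))$ be the subgroup of $G(\C)$ corresponding to $\D$ under this bijection; the content of Thomason's result is that $\D$ coincides with the full subcategory $\{ X \in \C \mid [X] \in H \}$.

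The argument then proceeds in three short steps. First, since $[U]$ is torsion in $G(\C)$, choose $n \geq 1$ with $n[U] = 0$. Second, in the Grothendieck group of any triangulated category additivity on direct sums holds: the split triangle $U \rt U \oplus V \rt V \rt U[1]$ with zero connecting morphism gives $[U \oplus V] = [U] + [V]$, and iterating yields $[U^n] = n[U] = 0$ in $G(\C)$. In particular $[U^n]$ lies in $H$, so Thomason's description forces $U^n \in \D$. Third, since $\D$ is radical dense, the defining implication $U^n \in \D \Rightarrow U \in \D$ gives $U \in \D$, as required.

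There is essentially no obstacle here: the statement is a direct reading of the definitions once Thomason's correspondence and the radical condition are in hand. The only minor point that warrants mention is additivity of the Grothendieck class on direct sums, which is standard. This is why the lemma is described as a trivial observation, even though (as the subsequent use in \ref{main-cat} will show) it is the decisive ingredient connecting the torsion-freeness of $G(\C)/H$ to membership in $\D$.
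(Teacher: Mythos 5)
Your proof is correct and follows essentially the same route as the paper: identify $\D$ with the set of objects whose class lies in the corresponding subgroup $H$ via Thomason's bijection, observe $[U^n]=n[U]=0\in H$ so $U^n\in\D$, and then invoke the radical property. The only difference is that you spell out the additivity of the class on direct sums, which the paper takes for granted.
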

\begin{proof}
  Let $\D$ correspond to the subgroup $H$ of $G(\C)$. So $\D$ consists of all elements $V$ such that $[V] \in H$.

  As $[U]$ is a torsion element we have $n[U] = 0$ for some $n \geq 2$. We have $[U^n] = n[U] = 0$. In particular $[U^n] \in H$. So $U^n \in \D$.
  As $\D$ is radical we get that $U \in \D$.
\end{proof}
In \ref{max} we proved that if $H$ is a subgroup of $G(\C)$ then $\I(H)$ has maximal elements. Our next result further yields the structure of $\I(H)$.
\begin{proposition}
(with hypotheses as in \ref{max}). There is a unique maximal element in $\I(H)$.
\end{proposition}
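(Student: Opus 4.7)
My plan is to construct an explicit radical dense subcategory $\D_H \in \I(H)$ and show that it contains every other element of $\I(H)$; then in particular it is the maximum, hence the unique maximal element. The key observation is that under Thomason's bijection (\ref{T-const}), the radical dense subcategories of $\C$ correspond precisely to those subgroups $K$ of $G(\C)$ that are \emph{saturated}, meaning $nx \in K$ for some $n \geq 1$ forces $x \in K$. This correspondence uses that every element of $G(\C)$ has the form $[X]$ for some $X \in \C$ (see \ref{Groth}) and that $[X^n] = n[X]$ in $G(\C)$.

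Let $\phi \colon G(\C) \rt G(\C)_\Q$ denote the canonical map $x \mapsto x \otimes 1$, and set $K_H := \phi^{-1}(H)$. Because $H$ is a $\Q$-subspace it is closed under division by positive integers, so $K_H$ is saturated: if $nx \in K_H$ then $n\phi(x) \in H$, hence $\phi(x) \in H$ and $x \in K_H$. Let $\D_H$ denote the corresponding (radical) dense subcategory, with $G(\D_H) \cong K_H$. By flatness of $\Q$ over $\Z$, $G(\D_H)_\Q \cong K_H \otimes \Q$ injects into $G(\C)_\Q$ with image equal to the $\Q$-subspace spanned by $\phi(K_H)$, which is clearly contained in $H$. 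For the reverse containment, given $h \in H$, apply \ref{elt-Q} to write $h = (1/m)\phi([U])$ for some $U \in \C$ and $m \geq 1$; then $\phi([U]) = mh \in H$ forces $[U] \in K_H$, so $h$ lies in the $\Q$-span of $\phi(K_H)$. Hence $\image \delta^{\D_H}_\Q = H$, i.e.\ $\D_H \in \I(H)$.

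For maximality, let $\D' \in \I(H)$ and $X \in \D'$. The class $[X] \in G(\D')$ maps under $\delta^{\D'}_\Q$ to $\phi([X])$, which lies in $\image \delta^{\D'}_\Q = H$. Thus $\phi([X]) \in H$, so $[X] \in K_H$ and $X \in \D_H$, giving $\D' \subseteq \D_H$. Together with $\D_H \in \I(H)$ this exhibits $\D_H$ as the greatest element of $\I(H)$, in particular the unique maximal element. The main point requiring care is verifying that the image of $\delta^{\D_H}_\Q$ is all of $H$ and not merely a proper subspace; here \ref{elt-Q}, which represents an arbitrary element of $G(\C)_\Q$ in the convenient form $(1/m)[U]$, is what bridges the $\Q$-structure on $H$ and the $\Z$-structure on $K_H$.
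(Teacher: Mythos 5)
Your proof is correct, and it takes a genuinely different (and arguably cleaner) route than the paper. The paper proceeds indirectly: it first invokes Zorn's lemma to produce maximal elements of $\I(H)$ (Lemma \ref{max}), then shows maximal elements must be radical via Lemma \ref{equal} (Lemma \ref{max-dense}), and finally proves that any two maximal elements $\D_1, \D_2$ coincide by an ``element-chasing'' argument that relies on Lemma \ref{torsion}: given $U \in \D_1$, one writes $[U] = \frac{1}{m}[V]$ with $V \in \D_2$, observes $[U^m] - [V]$ is torsion, and then uses radicality of $\D_2$ twice. You instead exhibit the greatest element of $\I(H)$ explicitly: it is the Thomason dense subcategory attached to $K_H := \phi^{-1}(H)$ where $\phi \colon G(\C) \to G(\C)_\Q$ is the canonical map. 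Your key lemma --- that under Thomason's bijection radical dense subcategories correspond exactly to saturated subgroups $K$ (those with $G(\C)/K$ torsion-free) --- is precisely the content of the paper's Proposition \ref{rad-quot}, which you are rediscovering and then deploying immediately rather than after the fact. What this buys you: a constructive description of $\D_H$, a stronger conclusion (greatest rather than merely unique maximal), no dependence on Zorn's lemma, and no separate appeal to \ref{max-dense} or \ref{torsion}. The only place you share with the paper's argument is \ref{elt-Q}, used to show every element of $H$ is a rational multiple of some $\phi([U])$, hence $\image\delta^{\D_H}_\Q$ is all of $H$. Your proof in fact collapses the paper's three-step chain (\ref{max}, \ref{max-dense}, the present proposition) into one self-contained argument, which also subsumes the subsequent Proposition \ref{rad-unique}.
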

\begin{proof}
  Let $\D_1$ and $\D_2$ be maximal elements in $\I(H)$. Let $U \in \D_1$. Then $\frac{1}{1}[U] \in H = \image \delta^{\D_2}_\Q$. By \ref{elt-Q} we get that
  $$\frac{1}{1}[U] = \frac{1}{m}[V] \quad \text{for some $V \in \D_2$ and $m \geq 1$.}$$
  It follows that $[U^m] -[V] = [W]$ where $[W]$ is a torsion element in $G(\C)$. As $\D_2$ is a radical dense subcategory of $\C$ we get that $W \in \D_2$, see \ref{torsion}. So $[U^m] \in \image \delta^{\D_2}$. Thus $U^m \in \D_2$. Again as $\D_2$ is a radical dense subcategory of $\C$ we get that $U \in \D_2$. Thus $\D_1 \subseteq \D_2$. Similarly $\D_2 \subseteq \D_1$. So $\D_1 = \D_2$.
\end{proof}

\s So far we have proved that there is a unique maximal element of $\I(H)$ which is a radical  dense subcategory of $\C$. Next we show that there is a unique radical dense sub-category in $\I(H)$.
More precisely we show
\begin{proposition}\label{rad-unique}
 Let $\C$ be an essentially small triangulated category and let $\B$ be a radical dense subcategory of $\C$. Let $H = \image(\delta_\Q^\B \colon G(\B) \rt G(\C))$. Let $\D$ be the unique maximal element in $\I(H)$. Then $\B = \D$.
\end{proposition}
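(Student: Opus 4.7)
The plan is to prove $\B = \D$ by mutual inclusion, exploiting the fact that the argument from the preceding proposition actually only requires one of the two categories to be radical dense (the other merely to lie in $\I(H)$). Both conditions are in place here: $\B \in \I(H)$ by the very definition $H = \image \delta_\Q^\B$ and is radical dense by hypothesis, while $\D \in \I(H)$ as a maximal element and is radical dense by Lemma \ref{max-dense}. So the situation is symmetric and either inclusion suffices to force equality.

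I would carry out $\D \sub \B$ in detail. Take $U \in \D$; then $\frac{1}{1}[U] \in H = \image \delta_\Q^\B$, so by Lemma \ref{elt-Q} applied inside $G(\B)_\Q$ and pushed forward along $\delta_\Q^\B$, we may write $\frac{1}{1}[U] = \frac{1}{m}[V]$ in $G(\C)_\Q$ for some $V \in \B$ and $m \geq 1$. This forces $[U^m] - [V]$ to be a torsion element of $G(\C)$; realizing it as $[W]$ for some $W \in \C$ (possible by \ref{Groth}) and invoking Lemma \ref{torsion} together with the radical denseness of $\B$, we place $W \in \B$. Hence $[U^m] = [V] + [W] \in \image \delta^\B$, so Thomason's correspondence \ref{T-const} yields $U^m \in \B$, and the radical property of $\B$ finally delivers $U \in \B$. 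The reverse inclusion $\B \sub \D$ follows from the identical argument with $\B$ and $\D$ interchanged, using now that $\D$ is radical dense.

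I foresee no genuine obstacle; the proof is essentially a repackaging of the preceding proposition's argument, just with one maximality assumption relaxed to mere membership in $\I(H)$. The only point requiring a touch of care is the ``lifting'' step --- verifying that any element of $H$ admits a representation $\frac{1}{m}[V]$ with $V$ drawn from the intended subcategory --- but this is immediate from \ref{elt-Q} together with the injectivity of the relevant map $\delta_\Q^\B$ (resp.\ $\delta_\Q^\D$).
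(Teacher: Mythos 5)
Your proof is correct and rests on the same core torsion argument the paper uses. The one genuine variation is in the inclusion $\B \subseteq \D$: the paper deduces this from the uniqueness of the maximal element in $\I(H)$ (which tacitly appeals to the fact that a unique maximal element produced by Zorn's lemma is actually a maximum), whereas you obtain it by running the torsion argument a second time with $\B$ and $\D$ swapped, using $\D \in \I(H)$ and $\D$ radical dense (via Lemma \ref{max-dense}). Your route is slightly more self-contained and symmetric: it proves outright that any two radical dense members of $\I(H)$ coincide, never needing maximality of $\D$ beyond the fact that maximal elements are radical dense. The small points you flag --- applying \ref{elt-Q} inside $G(\B)_\Q$ before pushing forward along the injective $\delta^\B_\Q$, and realizing the torsion class $[U^m]-[V]$ as $[W]$ via \ref{Groth} before invoking \ref{torsion} --- are exactly the details the paper's proof also relies on, and you handle them correctly.
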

\begin{proof}
We note that $\B \in \I(H)$. By uniqueness of maximal element in $\I(H)$ we get that $\B \subseteq \D$.
Let $U \in \D$. Then $[U] \in H = \image \delta^{\B}_\Q$. By \ref{elt-Q} we get that
  $$\frac{1}{1}[U] = \frac{1}{m}[V] \quad \text{for some $V \in \B$ and $m \geq 1$.}$$
   It follows that $[U^m] -[V] = [W]$ where $[W]$ is a torsion element in $G(\C)$. As $\B$ is a radical dense subcategory of $\C$ we get that $W \in \B$, see \ref{torsion}. So $[U^m] \in \image \delta^{\B}$. Thus $U^m \in \B$. Again as $\B$ is a radical dense subcategory of $\C$ we get that $U \in \B$. Thus $\D \subseteq \B$. So $\D = \B$.
\end{proof}

We describe a group theoretic criterion for a dense category of $\C$ to be radical.
\begin{proposition}\label{rad-quot}
 Let $\C$ be an essentially small triangulated category and let $\D$ be a  dense subcategory of $\C$.
 The following assertions are equivalent
 \begin{enumerate}[\rm (i)]
 \item
 $\D$ is radical.
 \item
 $G(\C)/G(\D)$ is torsion free.
 \end{enumerate}
\end{proposition}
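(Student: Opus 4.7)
The plan is to use Thomason's one-to-one correspondence recalled in \ref{T-const}: a dense subcategory $\D$ of $\C$ corresponds to the subgroup $H = \image(\delta^\D\colon G(\D)\rt G(\C))$, and under this bijection
\[
\D = \{U\in\C \mid [U]\in H\}.
\]
This is the only tool needed; the rest is a direct translation between the condition that $U^n\in\D$ forces $U\in\D$ and the condition that $G(\C)/H$ has no torsion.

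First I would record two elementary observations. For any $U\in\C$ and $n\geq 1$ we have $[U^n]=n[U]$ in $G(\C)$; and by \ref{Groth} every element of $G(\C)$ is of the form $[V]$ for some $V\in\C$. Combined with the Thomason correspondence, these say that membership $U^n\in\D$ is equivalent to $n[U]\in H$, i.e.\ to the class of $[U]$ being $n$-torsion in $G(\C)/H$.

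For (i)$\Rightarrow$(ii), assume $\D$ is radical, and let $\xi\in G(\C)/H$ be a torsion element. Pick $V\in\C$ with $[V]$ mapping to $\xi$, and let $n\geq 1$ with $n\xi=0$. Then $n[V]\in H$, hence $[V^n]\in H$, so by Thomason's correspondence $V^n\in\D$. Because $\D$ is radical, this forces $V\in\D$, whence $[V]\in H$ and $\xi=0$. Thus $G(\C)/H$ is torsion free.

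For (ii)$\Rightarrow$(i), assume $G(\C)/H$ is torsion free. Suppose $U\in\C$ satisfies $U^n\in\D$ for some $n\geq 1$. Then $[U^n]=n[U]\in H$, so the class of $[U]$ in $G(\C)/H$ is $n$-torsion, hence zero. Therefore $[U]\in H$, and Thomason's correspondence gives $U\in\D$. This shows $\D$ is radical.

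The argument is short and there is no real obstacle; the only point to be careful about is to invoke the fact that $\D=\{U\in\C\mid[U]\in H\}$ (both directions of Thomason's bijection), since starting only from $H=\image\delta^\D$ one must know that no objects with class in $H$ are missing from $\D$. This is exactly the content of the bijection cited in \ref{T-const}.
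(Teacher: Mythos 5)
Your proof is correct and follows essentially the same approach as the paper's: both directions reduce to the equivalence $U^n\in\D\iff n[U]\in G(\D)$, which you justify explicitly via Thomason's bijection while the paper uses it implicitly. Nothing to add.
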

\begin{proof}
(i) $\implies$ (ii): Let $\ov{[U]}$ be a torsion element in $G(\C)/G(\D)$. Then $n[U] \in G(\D)$. So $U^n \in \D$. As $\D$ is radical we get that $U \in \D$. So $\ov{[U]} = 0$.

(ii) $\implies$ (i): Suppose $U^n \in \D$. Then note that $\ov{[U]}$ is a torsion element in $G(\C)/G(\D)$. By assumption $G(\C)/G(\D)$ is torsion free. So $\ov{[U]} = 0$. Therefore $[U] \in G(\D)$.  Thus $U \in \D$. So $\D$ is radical.
\end{proof}
Finally we prove Theorem \ref{main-cat}. We restate it here for the convenience of the reader.
\begin{theorem}\label{main-cat-body}
Let $\C$ be a skeletally small triangulated category. Let $H$ be a $\Q$-subspace of $G(\C)_\Q$. Then
\begin{enumerate}[\rm (1)]
  \item There exists a radical  dense subcategory $\D$ of $\C$ such that $\image \delta^\D_\Q = H$.
  \item If $\D_1, \D_2$ are two radical dense subcategories of $\C$ such that $H = \image \delta^{\D_1}_\Q = \image \delta^{\D_2}_\Q$ then $\D_1 = \D_2$. Thus we may write $\D_H$ to be the unique radical  dense subcategory of $\C$ with $\image \delta^\D_\Q = H$.
\item Let $H_1, H_2$ be $\Q$-subspaces of $G(\C)_\Q$. We have $\D_{H_1} \subseteq \D_{H_2}$ if and only if $H_1 \subseteq H_2$.
\end{enumerate}
\end{theorem}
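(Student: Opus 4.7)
My plan is to assemble the preceding lemmas into the three assertions; the substantive work has already been carried out, and this theorem is essentially a packaging statement.

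For part (1), I will start from Lemma \ref{exist} to obtain a dense subcategory $\D_0 \subseteq \C$ with $\image \delta^{\D_0}_\Q = H$, then pass to its radicalisation $\sqrt{\D_0}$. By Proposition \ref{sqrt-body} this is a radical dense subcategory of $\C$, and by Lemma \ref{equal} the inclusion $\D_0 \subseteq \sqrt{\D_0}$ induces an isomorphism on rationalised Grothendieck groups. Hence $\image \delta^{\sqrt{\D_0}}_\Q = \image \delta^{\D_0}_\Q = H$, and $\D = \sqrt{\D_0}$ does the job.

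For part (2), I will appeal to Proposition \ref{rad-unique}: each radical dense $\D_i$ with $\image \delta^{\D_i}_\Q = H$ coincides with the unique maximal element of $\I(H)$, so $\D_1 = \D_2$.

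For part (3), the forward direction is immediate: if $\D_{H_1} \subseteq \D_{H_2}$ then $\image \delta^{\D_{H_1}} \subseteq \image \delta^{\D_{H_2}}$ inside $G(\C)$, and tensoring with $\Q$ yields $H_1 \subseteq H_2$. For the converse, assume $H_1 \subseteq H_2$ and take $U \in \D_{H_1}$. Then $\frac{1}{1}[U] \in H_1 \subseteq H_2 = \image \delta^{\D_{H_2}}_\Q$, so by \ref{elt-Q} one may write $\frac{1}{1}[U] = \frac{1}{m}[V]$ in $G(\C)_\Q$ for some $V \in \D_{H_2}$ and $m \geq 1$. Hence $[U^m] - [V]$ is a torsion class $[W] \in G(\C)$. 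Since $\D_{H_2}$ is radical dense, Lemma \ref{torsion} gives $W \in \D_{H_2}$, so $[U^m] = [V] + [W] \in \image \delta^{\D_{H_2}}$; the Thomason correspondence \ref{T-const} then forces $U^m \in \D_{H_2}$, and radicality yields $U \in \D_{H_2}$. Thus $\D_{H_1} \subseteq \D_{H_2}$. The only non-routine step is this converse of (3), which recycles the calculation behind Proposition \ref{rad-unique}; every other piece follows mechanically from the results already established in this section, so I do not anticipate any substantive obstacle.
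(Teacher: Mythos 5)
Your proposal is correct, and it departs from the paper's route in two instructive ways.

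For part (1), the paper proceeds via Zorn's lemma: it uses Lemma \ref{exist} to see that $\I(H)$ is nonempty, Lemma \ref{max} to produce a maximal element of $\I(H)$, and Lemma \ref{max-dense} to deduce that such a maximal element is radical. You bypass this machinery entirely by taking the $\D_0$ from \ref{exist} and simply passing to $\sqrt{\D_0}$; Proposition \ref{sqrt-body} gives radicality and Lemma \ref{equal} gives $\image \delta^{\sqrt{\D_0}}_\Q = \image \delta^{\D_0}_\Q = H$. This is strictly more elementary and avoids invoking Zorn for a result that does not require it (the Zorn apparatus is still needed elsewhere in the paper, namely for Proposition \ref{rad-unique}, but not for (1) itself).

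For part (2) your appeal to Proposition \ref{rad-unique} matches the paper exactly, and for the forward direction of (3) both arguments amount to the same observation that inclusion of subcategories induces inclusion of images.

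For the converse of (3), you argue pointwise: given $U \in \D_{H_1}$, write $[U] = \frac{1}{m}[V]$ with $V \in \D_{H_2}$ via \ref{elt-Q}, note that $[U^m] - [V]$ is torsion, apply Lemma \ref{torsion} and radicality of $\D_{H_2}$ to conclude $U \in \D_{H_2}$. This deliberately recycles the calculation inside the proof of \ref{rad-unique}. The paper instead applies part (1) to the ambient category $\D_{H_2}$ to produce a radical dense subcategory $\D_1 \subseteq \D_{H_2}$ with $\image \delta^{\D_1}_\Q = H_1$, checks that $\D_1$ is then radical and dense in $\C$, and invokes the uniqueness from part (2) to identify $\D_1$ with $\D_{H_1}$. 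Both are sound; yours is more self-contained at the cost of repeating a torsion computation, while the paper's is shorter but relies on the slightly delicate transitivity of radicality and density across the nested inclusion $\D_1 \subseteq \D_{H_2} \subseteq \C$.
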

\begin{proof}
(1) This follows from \ref{max-dense}.

(2) This follows from \ref{rad-unique}.

(3) First assume $\D_1 \subseteq \D_2$. Note that as $\D_1$ is dense in $\C$ it is also a dense subcategory of $\D_2$. As observed before the natural map
  $i^{\D_1}_{\D_2} \colon  G(\D_1) \rt G(\D_2)$ is an injection. We have a commutative diagram
    \[
\xymatrix{
\
&G(\D_1)
\ar@{->}[dl]_{i^{\D_1}_{\D_2}}
\ar@{->}[dr]^{\delta^{\D_1}}
 \\
G(\D_2)
\ar@{->}[rr]_{  \delta^{\D_2 }}
&\
&G(\C)
}
\]
After tensoring with $\Q$ we have a commutative diagram
  \[
\xymatrix{
\
&G(\D_1)_\Q
\ar@{->}[dl]_{i^{\D_1}_{\D_2}\otimes \Q}
\ar@{->}[dr]^{\delta^{\D_1}_\Q}
 \\
G(\D_2)_\Q
\ar@{->}[rr]_{  \delta^{\D_2}_\Q }
&\
&G(\C)_\Q
}
\]
All the maps above are injections. So $H_1 = \image \delta^{\D_1}_\Q \subseteq \image \delta^{\D_2}_\Q  = H_2$.

Conversely assume $H_1 \subseteq H_2$. Let $\D_{H_2}$ be the radical dense subcategory of $\C$ corresponding to $H_2$. We note that $G(\D_{H_2})_\Q = H_2$. As $H_1 $ is $\Q$-subspace of $G(\D_2)_\Q$ by (1) there exists a radical dense subcategory $\D_1$ of $\D_{H_2}$ such that $(i^{\D_1}_{\D_{H_2}}\otimes \Q)(G(\D_1)_\Q) = H_1$. Observe that from the above commutative diagram we have $\image \delta^{\D_1}_\Q = H_1$. Note that $\D_1$ is also a dense and radical subcategory of $\C$. By (2) it follows that $\D_1 = \D_{H_1}$. The result follows.
\end{proof}

\section{Proof of Theorem \ref{main} and Theorem \ref{main-st}}
In this section we give proofs of Theorem \ref{main} and Theorem \ref{main-st}. We first prove:
\begin{theorem}\label{main3-body}
Let $(A,\m)$ be an excellent Gorenstein isolated singularity of dimension $d \geq 2$. Let $\eta \colon G(A) \rt G(\wh{A})$ and $\theta \colon G(\CMS(A)) \rt G(\CMS(\wh{A}))$ be the natural maps.
Then $\eta$ and $\theta$ are injective.
The following assertions are equivalent:
\begin{enumerate}[\rm (i)]
  \item $\eta_\Q$ is an isomorphism.
  \item $\theta_\Q$ is an isomorphism.
\end{enumerate}
\end{theorem}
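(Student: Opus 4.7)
The structural strategy is to reduce the claim to a snake-lemma argument using a Buchweitz-style short exact sequence, and then separately handle the injectivity.

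Since $A$ is Gorenstein, Buchweitz's equivalence $\CMS(A) \simeq D_{\mathrm{sg}}(A) = D^b(\mmod A)/\mathrm{perf}(A)$ combined with the standard Verdier-localization sequence on $K_0$ yields an exact sequence
$$ \Z \cdot [A] \xar G(A) \xar G(\CMS(A)) \xar 0. $$
Left exactness holds because $[A]$ has infinite order in $G(A)$: the rank map $[M] \mapsto \ell_{A_P}(M_P)$ at any minimal prime $P$ sends $[A]$ to $\ell_{A_P}(A_P) \neq 0$. The analogous sequence holds for $\wh{A}$, and completion produces a commutative diagram of short exact sequences with vertical maps $\mathrm{id}_\Z$, $\eta$, and $\theta$. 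The snake lemma yields $\ker\theta \cong \ker\eta$ and $\coker\theta \cong \coker\eta$. Tensoring with $\Q$ preserves exactness, so the same isomorphisms hold for $\eta_\Q$ and $\theta_\Q$. Therefore $\eta_\Q$ is an isomorphism if and only if $\theta_\Q$ is, proving the stated equivalence modulo injectivity.

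It then remains to show that $\theta$ (equivalently, $\eta$) is injective. My plan is to first prove that the completion functor $\Phi \colon \CMS(A) \to \CMS(\wh{A})$ is fully faithful. For MCM $A$-modules $M, N$, the isolated singularity hypothesis forces $A_P$ to be regular for every $P \neq \m$, so $M_P$ and $N_P$ are free (as MCM modules over a regular local ring), and stable homomorphisms between free modules vanish. Hence $\sHom_A(M,N)$ is finitely generated and supported only at $\m$, so it has finite length and is $\m$-adically complete. It follows that
$$ \sHom_A(M,N) \cong \sHom_A(M,N) \otimes_A \wh{A} \cong \sHom_{\wh{A}}(\wh{M}, \wh{N}), $$
which gives full faithfulness of $\Phi$.

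The main obstacle will be passing from full faithfulness of $\Phi$ to injectivity of $G(\Phi) = \theta$. This is not automatic from Theorem~\ref{b-1}, which would require $\Phi$ to be an equivalence up to direct summands --- a condition essentially rephrased by statement (ii) of Theorem~\ref{main-st} itself. My approach is to factor $\Phi$ through its essential image $\D_0 \subseteq \CMS(\wh{A})$, a triangulated subcategory equivalent to $\CMS(A)$, obtaining $G(\CMS(A)) \cong G(\D_0)$ and reducing to showing $G(\D_0) \hookrightarrow G(\CMS(\wh{A}))$. Inside the thick closure $\D_1$ of $\D_0$ in $\CMS(\wh{A})$, the density of $\D_0$ in $\D_1$ gives $G(\D_0) \hookrightarrow G(\D_1)$ by Thomason's theorem. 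The hardest step will be establishing $G(\D_1) \hookrightarrow G(\CMS(\wh{A}))$: here I would exploit that $\CMS(\wh{A})$ is Krull--Schmidt (since $\wh{A}$ is Henselian), hence has weak cancellation in the sense of \ref{wc}, to control the triangles in $\CMS(\wh{A})$ with vertices outside $\D_1$, and argue that no such triangle relation forces a nontrivial class of $G(\D_1)$ to vanish in $G(\CMS(\wh{A}))$.
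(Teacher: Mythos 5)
Your reduction of the equivalence to the injectivity statement is correct and matches the paper: you use the short exact sequence $0 \to \Z[A] \to G(A) \to G(\CMS(A)) \to 0$ (the paper gets it directly from $G(\CMS(A)) = G(A)/\Z[A]$; your Buchweitz-plus-localization derivation is equivalent), verify left exactness via a rank function, and apply the snake lemma. That part is fine and is essentially the paper's argument.

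The gap is in your treatment of the injectivity of $\theta$, and it stems from a misreading of what ``equivalence up to direct summands'' requires. You write that this condition is ``essentially rephrased by statement (ii) of Theorem~\ref{main-st} itself,'' and therefore conclude that Theorem~\ref{b-1} cannot be applied without circularity. This is not so. ``Equivalence up to direct summands'' only asks that every MCM $\wh{A}$-module $M$ be a \emph{direct summand} of some $\wh{N}$, i.e., $M \oplus M' \cong \wh{N}$ for some $\wh{A}$-module $M'$ and some MCM $A$-module $N$ --- with no control over $M'$. Condition (ii) of Theorem~\ref{main-st} is the much stronger statement that one can take $M' \cong M^{r-1}$, i.e., $M^r \cong \wh{N}$. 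The weaker direct-summand statement is an \emph{unconditional} fact in your setting: for an excellent Gorenstein isolated singularity, every MCM $\wh{A}$-module is a direct summand of $N \otimes_A \wh{A}$ for some MCM $A$-module $N$ (implicit in Wiegand \cite[2.9]{W}, explicit in Takahashi \cite[3.2]{Ta}). Once you have this and full faithfulness, Theorem~\ref{b-1} gives injectivity of $\theta$ directly, with no circularity.

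Your proposed workaround is correspondingly unnecessary, and the part you flag as ``the hardest step'' --- showing $G(\D_1) \hookrightarrow G(\CMS(\wh{A}))$ for $\D_1$ the thick closure of the essential image --- would not go through by the strategy you sketch. For a proper thick subcategory $\D_1 \subsetneq \C$, the map $G(\D_1) \to G(\C)$ can fail to be injective (the Verdier localization sequence $G(\D_1) \to G(\C) \to G(\C/\D_1) \to 0$ is not exact on the left in general), and Krull--Schmidt/weak cancellation does not rescue this; weak cancellation in the paper is used only to lift classes in $\image G(\phi)$ to actual objects (Theorem~\ref{b-onto}), not to force injectivity on $K_0$ of a thick inclusion. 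The resolution is simply that, by the Wiegand--Takahashi result, the essential image is already dense in $\CMS(\wh{A})$, so $\D_1 = \CMS(\wh{A})$ and the problematic step evaporates; but then you should not pass through $\D_1$ at all and should instead cite the density result and apply Theorem~\ref{b-1}.

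Two smaller remarks. First, in this setting $A$ and $\wh{A}$ are domains (Gorenstein isolated singularity of dimension $\geq 2$ satisfies $R_1$ and $S_2$, hence is normal), so ``rank at any minimal prime'' is just the usual rank; the paper uses this explicitly. Second, your full-faithfulness argument for $\Phi$ is correct and is the same as the paper's.
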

\begin{proof}
We first note that $A$ and $\wh{A}$ are domains.
So the natural map $i \colon \Z \rt G(A)$  given by $1 \mapsto [A]$ is split by the map $r  \colon G(A) \rt  \Z$ defined by $r([X]) = \rank(X)$.
We have $G(\CMS(A)) = G(A)/\Z [A]$. We have a commutative diagram
 \[
  \xymatrix
{
 0
 \ar@{->}[r]
  & \Z
\ar@{->}[r]^{i}
\ar@{->}[d]^{1}
 & G(A)
\ar@{->}[r]^{\pi}
\ar@{->}[d]^{\eta}
& G(\CMS(A))
\ar@{->}[r]
\ar@{->}[d]^{\theta}
&0
\\
 0
 \ar@{->}[r]
  & \Z
\ar@{->}[r]^{i^\prime}
 & G(\wh{A})
\ar@{->}[r]^{\pi^\prime}
& G(\CMS(\wh{A}))
    \ar@{->}[r]
    &0
\
 }
\]
It follows that $\eta$ is injective if and only if $\theta$ is injective.

Let $\psi \colon \CMS(A) \rt \CMS(\wh{A})$ be the obvious functor (which is clearly a triangulated functor).
As $A$ is an isolated singularity we have
$\sHom_A(M, N)$ has finite length for every $M, N \in \CMS(A)$. It follows that $\psi$  is  fully faithful. Furthermore   every MCM $\wh{A}$-module $M$ is a direct summand of $N \otimes_A \wh{A}$ where $N$ is a MCM
$A$-module, implicitly this is in \cite[2.9]{W}; for an explicit proof see \cite[3.2]{Ta}. By Theorem \ref{b-1} the natural map
$\theta \colon G(\CMS(A)) \rt  G(\CMS(\wh{A}))$ is injective.
Thus $\eta$ is also injective.

From the above commutative diagram it follows that $\eta_\Q$ is an isomorphism if and only if $\theta_\Q$ is an isomorphism.
\end{proof}

Next we prove Theorem \ref{main-st}. We restate it here for the convenience of the reader.
\begin{theorem}\label{main-st-body}
(with hypotheses as in \ref{stable}). The following assertions are equivalent:
\begin{enumerate}[\rm (i)]
  \item $\theta_\Q$ is an isomorphism.
  \item For any MCM $\wh{A}$-module  $M$ there exists an MCM $A$-module $N$ and integer $r \geq 1$ (depending on $M$) such that $M^r\cong \wh{N}$ in $\CMS(\wh{A})$.
\end{enumerate}
\end{theorem}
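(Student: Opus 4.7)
The plan is to combine the injectivity of $\theta$ already established in Theorem \ref{main3-body} with the surjectivity criterion supplied by Theorem \ref{b-onto}. The functor $\psi \colon \CMS(A) \rt \CMS(\wh{A})$ which induces $\theta$ was shown, in the proof of Theorem \ref{main3-body}, to be fully faithful with every object of $\CMS(\wh{A})$ a direct summand of some $\wh{N}$; that is, $\psi$ is an equivalence up to direct summands. So Theorem \ref{b-onto} will apply to $\psi$ as soon as $\CMS(\wh{A})$ is shown to satisfy weak cancellation.

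The direction (ii) $\implies$ (i) is straightforward. Since $\theta_\Q$ is injective by Theorem \ref{main3-body}, it suffices to establish surjectivity. Given any $\xi \in G(\CMS(\wh{A}))_\Q$, I would use \ref{elt-Q} to write $\xi = \frac{1}{m}[M]$ for some MCM $\wh{A}$-module $M$ and integer $m \geq 1$. Hypothesis (ii) then supplies an MCM $A$-module $N$ and integer $r \geq 1$ with $M^r \cong \wh{N}$ in $\CMS(\wh{A})$, so $r[M] = [\wh{N}] = \theta([N])$ and hence $\xi = \theta_\Q\bigl(\frac{1}{mr}[N]\bigr) \in \image \theta_\Q$.

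For (i) $\implies$ (ii), take any MCM $\wh{A}$-module $M$. Since $[M] \in G(\CMS(\wh{A}))_\Q = \image \theta_\Q$, clearing denominators yields an MCM $A$-module $N_0$ and integer $r \geq 1$ with $r[M] = \theta([N_0])$ in $G(\CMS(\wh{A}))$. Thus $[M^r] \in \image \theta$. The task now reduces to lifting this class through $\psi$: produce an MCM $A$-module $N$ with $\wh{N} \cong M^r$ in $\CMS(\wh{A})$. This is precisely the output of Theorem \ref{b-onto}, once its weak cancellation hypothesis is in place.

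The main (if essentially routine) obstacle is therefore to verify that $\CMS(\wh{A})$ satisfies weak cancellation. The plan is to deduce this from the Krull--Schmidt theorem for finitely generated modules over the complete local ring $\wh{A}$. An isomorphism $U \oplus V \cong U \oplus W$ in $\CMS(\wh{A})$ lifts, after adding free summands $F_1, F_2$, to an isomorphism $U \oplus V \oplus F_1 \cong U \oplus W \oplus F_2$ of $\wh{A}$-modules. Krull--Schmidt then permits cancellation of $U$ together with any common free indecomposable summands, leaving $V \oplus F_1' \cong W \oplus F_2'$ with $F_1', F_2'$ free; passing back to $\CMS(\wh{A})$ gives $V \cong W$, as required. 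With this verified, Theorem \ref{b-onto} applied to $\psi$ and $V = M^r$ yields the desired MCM $A$-module $N$ with $\wh{N} \cong M^r$ in $\CMS(\wh{A})$, completing the proof.
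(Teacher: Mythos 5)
Your proof is correct in substance, but the route you take for (i) $\implies$ (ii) is genuinely different from—and substantially more elementary than—the paper's. The paper derives this implication from the machinery of Section 3: it forms $H = \image\theta$, takes the Thomason dense subcategory $\D$ associated to $H$, uses Lemma \ref{equal} to identify $\image\delta^{\sqrt{\D}}_\Q$ with $H_\Q$, and then invokes Theorem \ref{main-cat-body} (uniqueness of the radical dense subcategory for a given $\Q$-subspace) to conclude $\sqrt{\D} = \CMS(\wh{A})$, whence $[M^r]\in\image\theta$. You instead reach $[M^r]\in\image\theta$ directly by tracing the class $[M]$ through $\theta_\Q$ and clearing denominators. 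In effect you have unwound the categorical abstraction: the essential content of the step $\sqrt{\D}=\CMS(\wh{A})$ is exactly this denominator-clearing, and your argument makes that explicit without detouring through radical dense subcategories. The remaining steps (verifying weak cancellation of $\CMS(\wh{A})$ via Krull--Schmidt over the complete local ring $\wh{A}$, then applying Theorem \ref{b-onto}) coincide with the paper's. Your proof is thus a leaner derivation of Theorem \ref{main-st}, though of course the paper's Theorem \ref{main-cat} is of independent interest.

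One small imprecision that you should tighten. In the step ``clearing denominators yields $r[M]=\theta([N_0])$ in $G(\CMS(\wh{A}))$'': clearing denominators from $[M]\otimes 1 = \theta_\Q\bigl(\tfrac{1}{m}[N_0]\otimes 1\bigr)$ only gives $(m[M]-\theta([N_0]))\otimes 1 = 0$ in $G(\CMS(\wh{A}))_\Q$, i.e. $m[M]-\theta([N_0])$ is a torsion element of $G(\CMS(\wh{A}))$, not necessarily zero. You must then multiply by the order $n$ of that torsion element, obtaining $nm[M]=\theta([N_0^{\,n}])$ on the nose, and take $r=nm$. The conclusion $[M^r]\in\image\theta$ is unchanged, so this is a fixable wording issue rather than a gap in the idea, but as written the claimed integral identity does not follow from denominator-clearing alone.
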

\begin{proof}
We first prove
(ii) $\implies$ (i):  By Theorem \ref{main3-body} it follows that $\theta_\Q$ is injective. Let $\xi \in G(\CMS(\wh{A}))_\Q$. By \ref{elt-Q} we get $\xi = \frac{1}{m}[M]$ for some $m \geq 1$ and a MCM $\wh{A}$-module $M$. By hypothesis there exists a MCM $A$-module with $M^r\cong \wh{N}$ in $\CMS(\wh{A})$. We note that $\theta_\Q(\frac{1}{mr}[N]) = \xi$. So $\theta_\Q$ is surjective.

Next we prove
(i) $\implies$ (ii): We have $\theta \colon G(\CMS(A)) \rt G(\CMS(\wh{A}))$ is an injection. Let $H = \image \theta$ and let $\D$ be the Thomason dense subcategory associated with $H$. We have
$\image \delta^{\sqrt{\D}}  = H_\Q$. By hypotheses we have
$H_\Q = G(\CMS(\wh{A}))$. So by Theorem \ref{main-cat-body} it follows that $\sqrt{\D} = \CMS(\wh{A})$.  Thus if $M \in \CMS(\wh{A})$ then $M^r \in \D$ for some $r \geq 1$. So $[M^r]$ is in the image of $\theta$. As $\CMS(\wh{A})$ satisfies weak cancellation (as it is Krull-Schmidt) it follows from \ref{b-onto} that $M^r \cong \wh{N}$ in $\CMS(\wh{A})$  for some MCM $A$-module $N$.
\end{proof}
Finally we give a proof of
Theorem \ref{main}. We restate it here for the convenience of the reader.
\begin{theorem}\label{main-body}
Let $(A,\m)$ be an excellent Gorenstein isolated singularity of dimension $d \geq 2$. Let $\eta \colon G(A) \rt G(\wh{A})$ be the natural map. The following assertions are equivalent:
\begin{enumerate}[\rm (i)]
  \item $\eta_\Q$ is an isomorphism.
  \item For any MCM $\wh{A}$-module  $M$ there exists an MCM $A$-module $N$ and integers $r \geq 1$ and $s \geq 0$ (depending on $M$) such that $M^r\oplus \wh{A}^s \cong \wh{N}$.
\end{enumerate}
\end{theorem}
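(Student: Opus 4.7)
The plan is to deduce Theorem \ref{main-body} directly from the two results just proved in this section, namely Theorem \ref{main3-body} and Theorem \ref{main-st-body}. Together these say that (i) is equivalent to $\theta_\Q$ being an isomorphism, and that $\theta_\Q$ is an isomorphism if and only if for every MCM $\wh{A}$-module $M$ there is an MCM $A$-module $N$ and $r \geq 1$ with $M^r \cong \wh{N}$ \emph{in the stable category} $\CMS(\wh{A})$. So the only piece of content left to supply is the passage between isomorphism in $\CMS(\wh{A})$ and honest $\wh{A}$-module isomorphism, up to free summands.

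For the implication (ii) $\Rightarrow$ (i), I would start from the hypothesis $M^r\oplus \wh{A}^s \cong \wh{N}$ and simply observe that free modules become zero in $\CMS(\wh{A})$. Hence the module isomorphism descends to $M^r\cong \wh{N}$ in $\CMS(\wh{A})$, which by Theorem \ref{main-st-body} gives that $\theta_\Q$ is an isomorphism, and then Theorem \ref{main3-body} yields (i).

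For the implication (i) $\Rightarrow$ (ii), Theorems \ref{main3-body} and \ref{main-st-body} yield an MCM $A$-module $N$ and $r \geq 1$ with $M^r \cong \wh{N}$ in $\CMS(\wh{A})$. I would then invoke the standard characterization of isomorphism in the stable category of MCM modules over a Gorenstein local ring: two MCM modules $X, Y$ are isomorphic in $\CMS(\wh{A})$ if and only if there exist integers $a, b \geq 0$ with $X\oplus \wh{A}^a \cong Y\oplus \wh{A}^b$ as $\wh{A}$-modules. Applying this to $X = M^r$ and $Y = \wh{N}$ produces $a, b \geq 0$ with
\[
M^r \oplus \wh{A}^a \;\cong\; \wh{N} \oplus \wh{A}^b \;=\; \wh{N \oplus A^b}.
\]
Setting $N' := N \oplus A^b$ (which is again an MCM $A$-module, since $A$ is Gorenstein) and $s := a$ delivers the required isomorphism $M^r \oplus \wh{A}^s \cong \wh{N'}$.

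There is no real obstacle beyond the two previous theorems; the only subtle point is the module-level lifting of a stable isomorphism, but this is immediate from the definition of morphisms in $\CMS$ (which are morphisms modulo those factoring through a free module), so it requires no deep input. The argument therefore reduces to a short two-direction bookkeeping proof built on Theorem \ref{main3-body} and Theorem \ref{main-st-body}.
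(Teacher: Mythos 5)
Your proposal is correct and follows essentially the same route as the paper: both directions are reduced to Theorems \ref{main3-body} and \ref{main-st-body}, and the only extra step is the translation between isomorphism in $\CMS(\wh A)$ and module isomorphism up to free summands, which the paper likewise invokes without elaboration. The one minor overstatement is that the "stable iso $\Leftrightarrow$ module iso up to free summands" fact is not literally immediate from the definition of $\sHom$ --- one uses the retraction trick to exhibit $M^r$ as a summand of $\wh N\oplus\wh A^a$ and then observes that the complementary summand is stably trivial, hence free over the local ring --- but this is standard and does not affect the correctness of the argument.
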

\begin{proof}
  (i) $\implies$ (ii): If $\eta_\Q$ is an isomorphism then by \ref{main3-body} we get that $\theta_\Q$ is an isomorphism. So by \ref{main-st-body} if $M$ is a MCM $\wh{A}$-module then there exists an MCM $A$-module $N_1$ and $r \geq 1$ such that $M^r \cong \wh{N_1}$ in $\CMS(\wh{A})$. As $\wh{A}$-modules  we get $M \oplus \wh{A}^s = \wh{N_1} \oplus \wh{A}^t$ for some $s, t \geq 0$. Set $N = N_1\oplus A^t$. The result follows.

  (ii) $\implies$ (i): By our hypotheses it follows that for any MCM $\wh{A}$-module  $M$ there exists an MCM $A$-module $N$ and integer $r \geq 1$ such that $M^r\cong \wh{N}$ in $\CMS(\wh{A})$.
  So by \ref{main-st-body}, $\theta_\Q$ is an isomorphism.  By \ref{main3-body} we get that $\eta_\Q$ is an isomorphism.
\end{proof}

\begin{remark}
\label{G-ext} One can define Grothendieck group of any extension closed additive subcategory of $\mmod(A)$. Let $\CMa(A)$ denote the additive category of MCM $A$-modules. The natural map
$G(\CMa(A))\rt G(A)$ is an isomorphism, see \cite[13.2]{Y}. Using this the assertion  (ii) $\implies$ (i) is trivial (we have to use $\eta_\Q$ is injective).
\end{remark}
\section{Proof of Theorem \ref{sub-ver}}
In this section we give a proof of Theorem \ref{sub-ver}. We prove it very generally.

\s \label{hyp-gen} We consider pairs $\D, \wh{\D}$ of thick subcategories of $\C$ and $\wh{C}$ respectively with the following properties:
\begin{enumerate}
\item We have a triangulated functor $\psi \colon \C \rt \wh{\C}$ which is an equivalence up to direct summands.
 \item $\wh{\C}$ has weak cancellation.
  \item If $M \in \D$ then $\psi(M) \in \wh{\D}$
  \item If $\psi(M) \in \wh{\D}$ then $M \in \D$.
\end{enumerate}
Let $\T = \C/\D$ and $\wh{\T} = \wh{\C}/\wh{\D}$. the Verdier quotient. We have natural maps $\theta \colon G(\C) \rt G(\wh{\C})$, $\alpha \colon G(\D) \rt G(\wh{\D})$ and $\beta \colon G(\T) \rt G(\wh{\T})$. We first show
\begin{proposition}\label{first} (with hypotheses as in \ref{hyp-gen}) We have
\begin{enumerate}[\rm (1)]
   \item $\theta$ is an injection.
  \item  The induced functor $\psi^\sharp \colon \D \rt \wh{\D}$ is an equivalence up to direct summands.
  \item The induced functor $\ov{\psi} \colon \T \rt \wh{\T}$ is an equivalence up to direct summands.
  \item $\alpha$ and $\beta$ are injections.
\end{enumerate}
\end{proposition}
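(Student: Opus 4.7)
The plan is to establish the four parts roughly in the order stated; (1) and (4) follow directly from Theorem \ref{b-1} once (2) and (3) are in place, while (2) and (3) are the substantive steps.

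For (1), Theorem \ref{b-1} applied to $\psi$ (an equivalence up to direct summands by hypothesis (1) of \ref{hyp-gen}) immediately yields injectivity of $\theta = G(\psi)$.

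For (2), the functor $\psi^\sharp$ is fully faithful as an inherited property from $\psi$, since $\D$ and $\wh{\D}$ are full subcategories. For the essential-surjectivity-up-to-summands half, I would introduce the full subcategory
\[
\wh{\D}_0 = \{ N \in \wh{\D} \mid N \text{ is a direct summand of } \psi(M) \text{ for some } M \in \D \},
\]
verify that $\wh{\D}_0$ is a thick subcategory of $\wh{\D}$ containing $\psi(\D)$, and then show $\wh{\D}_0 = \wh{\D}$. Closure of $\wh{\D}_0$ under direct summands and shifts is immediate. Closure under triangles uses the following construction: given a triangle $X \xrightarrow{f} Y \rt Z \rt X[1]$ in $\wh{\D}$ with $X, Y \in \wh{\D}_0$, pick $M_1, M_2 \in \D$ with $\psi(M_1) = X \oplus X'$ and $\psi(M_2) = Y \oplus Y'$ (thickness of $\wh{\D}$ forces $X', Y' \in \wh{\D}$), and lift $(f, 0) \colon \psi(M_1) \rt \psi(M_2)$ to a map $h \colon M_1 \rt M_2$ using $\psi$ fully faithful. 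A direct computation gives $\psi(\cone h) = Z \oplus Y' \oplus X'[1] \in \wh{\D}$, so by condition (4) of \ref{hyp-gen} one has $\cone h \in \D$, witnessing $Z \in \wh{\D}_0$.

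For (3), the induced quotient functor $\ov{\psi}$ is well-defined by condition (3) of \ref{hyp-gen}. Equivalence up to summands for $\ov{\psi}$ is automatic, since objects of $\wh{\T}$ are objects of $\wh{\C}$ and each is a summand of some $\psi(M)$, which is $\ov{\psi}(M)$ in $\wh{\T}$. For fully faithfulness, I would use the calculus of fractions: a morphism in $\wh{\T}(\psi(X), \psi(Y))$ is a roof $\psi(X) \xleftarrow{t} W \xrightarrow{g} \psi(Y)$ with $\cone(t) \in \wh{\D}$; using equivalence up to summands of $\psi$ present $W$ as a summand of $\psi(Z)$, lift the composite to $\C$ by full faithfulness of $\psi$, and use an octahedral argument together with conditions (3), (4) (and weak cancellation from hypothesis (2) of \ref{hyp-gen} to match up roofs) to arrange the lift to have cone in $\D$. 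For (4), once (2) and (3) are established, Theorem \ref{b-1} applied to $\psi^\sharp$ and to $\ov{\psi}$ gives injectivity of $\alpha = G(\psi^\sharp)$ and $\beta = G(\ov{\psi})$.

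The main obstacle is the equality $\wh{\D}_0 = \wh{\D}$ in (2), together with the analogous step in the fully faithful part of (3). Given $N \in \wh{\D}$, writing $\psi(M_0) = N \oplus N_0$ via equivalence up to summands only places $N_0$ in $\wh{\C}$, not a priori in $\wh{\D}$; if $N_0 \in \wh{\D}$ then $M_0 \in \D$ by (4) and we are done, but in general the challenge is to adjust the choice of $M_0$ so that the complement lands in $\wh{\D}$. I expect the resolution to combine thickness of $\wh{\D}$, an iterative use of the equivalence-up-to-summands property of $\psi$, and the weak cancellation in $\wh{\C}$.
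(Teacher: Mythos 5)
The gap you flag at the end is real, and it is exactly where the paper inserts a short but decisive trick that you are missing. For part (2), your construction of $\wh{\D}_0$ and the verification that it is a thick subcategory of $\wh{\D}$ containing $\psi(\D)$ does not by itself give $\wh{\D}_0 = \wh{\D}$ (a thick subcategory containing some objects need not be all of the ambient category), and your own ``main obstacle'' paragraph concedes as much. The paper's resolution is not an iterative adjustment of the complement $N_0$; it observes instead that for $U \in \wh{\D}$ the object $U \oplus U[1]$ has class $[U \oplus U[1]] = 0$ in $G(\wh{\C})$, so it trivially lies in $\image G(\psi)$. Theorem \ref{b-onto}, which is available here and which uses precisely the weak cancellation hypothesis on $\wh{\C}$ from \ref{hyp-gen}(2), then produces $V \in \C$ with $\psi(V) \cong U \oplus U[1]$. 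Since $U \oplus U[1] \in \wh{\D}$ (shift-closure and closure under sums), hypothesis (4) of \ref{hyp-gen} forces $V \in \D$, and $U$ is a direct summand of $\psi^\sharp(V)$. That single observation replaces the entire $\wh{\D}_0$-machinery and fills the gap you could not close.

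The same device recurs in part (3), where your sketch correctly identifies the calculus of fractions and the relevance of hypotheses (3), (4), but the step ``arrange the lift to have cone in $\D$'' is left as a hope rather than an argument. The paper makes it concrete: when a morphism in $\wh{\T}$ factors through some $W \in \wh{\D}$, one enlarges $W$ to $W \oplus W[1]$, which by the part-(2) mechanism equals $\psi(Z)$ for some $Z \in \D$ on the nose (not merely up to a summand), and full faithfulness of $\psi$ then pulls the factorization back to $\C$; this gives faithfulness. For fullness, the paper rotates the triangle $E \rt \wh{U} \rt W \rt E[1]$ coming from a roof, stabilizes by adding $W \xrightarrow{1} W \rt 0$ so that the $\wh{\D}$-vertex becomes $W[-1]\oplus W \cong \psi(L)$ with $L \in \D$, and again uses full faithfulness to lift the whole roof. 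So the essential missing ingredient in your proposal, in both (2) and (3), is the $U \oplus U[1]$ (resp.\ $W \oplus W[1]$) stabilization, which converts ``summand of an object in the image of $\psi$'' into ``actually in the image of $\psi$'' via Theorem \ref{b-onto}. Parts (1) and (4) you handle exactly as the paper does.
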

\begin{proof}
(1) This follows from \ref{b-1}.

(2) Clearly $\psi^\sharp$ is fully faithful. Also if $U \in \wh{D}$ then note that the considered as an element of $G(\wh{\C})$ we have $[U \oplus U[1]] = 0$. By hypothesis $\wh{C}$ has weak cancellation. So by \ref{b-onto} there exists $V \in \C$ with $\psi(V) = U\oplus U[1]$. As $\psi(V) \in \wh{\D}$ it follows by our hypotheses that $V \in \D$. The result follows.

(3) We first show that $\ov{\psi}$ is fully faithful. Let $U, V \in \T$. Set $\wh{U}, \wh{V}$ their images in $\wh{T}$.  We have to show the natural map
$\delta \colon \Hom_{\T}(U, V) \rt \Hom_{\wh{\T}}(\wh{U}, \wh{V})$ is bijective.

 We first show that $\ov{\psi}$ is faithful. Let $\xi \in \Hom_{\T}(U, V)$ with $\delta(\xi) = 0$.
We write $\xi$ as a left fraction
\[
\xymatrix{
\
&E
\ar@{->}[dl]_{g}
\ar@{->}[dr]^{f}
 \\
U
\ar@{->}[rr]_{\xi = fg^{-1}}
&\
&V
}
\]
We have $0 = \delta(\xi) = \delta(f) \circ \delta(g^{-1}) = \delta(f) \circ \delta(g)^{-1}$. So $\delta(f) = 0$. It follows that $\delta(f)$ factors through an element $W \in \wh{D}$, see \cite[2.1.26]{N}.
Say $\delta(f) = r\circ s$ where $s \colon  \wh{E} \rt W$ and $r \colon W \rt \wh{V}$. By (2) above we get that $W\oplus W[1] = \wh{Z}$ for some $Z \in \D$. Set $s^\prime \colon \wh{E} \rt W\oplus W[1] $ where $s^\prime  = (s, 0)$ and $r^\prime \colon W \oplus W[1]\rt \wh{V}$  as $r^\prime = (r, 0)$. Then $\delta(f) = r^\prime \circ s^\prime$. As $\psi$ is fully faithful it follows that $f$ factors through $Z$. So $f = 0$ in $\T$. Therefore $\xi = 0$. Thus $\ov{\psi}$ is faithful.

Next we show $\ov{\psi}$ is full. Suppose we have $\xi \in \Hom_{\wh{\T}}(\wh{U}, \wh{V})$.
We write $\xi$ as a left fraction
\[
\xymatrix{
\
&E
\ar@{->}[dl]_{g}
\ar@{->}[dr]^{f}
 \\
\wh{U}
\ar@{->}[rr]_{\xi = fg^{-1}}
&\
&\wh{V}
}
\]
We have a triangle $E \rt \wh{U} \rt W \rt E[1]$ with $W\in \wh{\D}$. Rotating we have a triangle $s \colon W[-1] \rt E \rt \wh{U} \rt W$. By (2) there exists $L \in \D$ with $\wh{L} = W[-1]\oplus W$.
Adding $W \xrightarrow{1}W \rt 0 \rt W[1]$ to $s$ we obtain a triangle $e \colon W[-1]\oplus W \rt E\oplus W \rt \wh{U} \rt W \oplus W[1]$. Rotating we have a triangle
$l \colon \wh{U[-1]} \xrightarrow{\eta} W[-1] \oplus W \rt E \oplus W \rt \wh{U}$. As $\psi$ is fully faithful we get $\eta = \psi(\delta)$ where $\delta \colon U[-1] \rt L$. It follows that
$E\oplus W = \psi(E^\prime)$ for some $E^\prime \in \C$.
We note that the natural map $\pi \colon E\oplus W \rt E$ is invertible in $\wh{T}$. So we have a left fraction
\[
\xymatrix{
\
&E\oplus W
\ar@{->}[dl]_{g\circ \pi}
\ar@{->}[dr]^{f \circ \pi}
 \\
\wh{U}
\ar@{->}[rr]_{\xi = fg^{-1} = (f\circ \pi)\circ (g \circ \pi)^{-1}}
&\
&\wh{V}
}
\]
As $\psi$ is fully faithful $g \circ \pi = \psi(g^\prime)$ where  $g^\prime \colon E^\prime \rt U$ and $f\circ \pi = \psi(f^\prime)$  where $f^\prime \colon E^\prime \rt V$. As $\cone(g \circ \pi) \in \wh{\D}$ it follows that $\cone(g^\prime) \in \D$. It follows that $\xi = \psi(\xi^\prime)$ where $\xi^\prime = f^\prime \circ (g^\prime)^{-1} \in \Hom_\T(U, V)$. Thus $\ov{\psi}$ is full.

Let $U \in \wh{T}$. Considering $U$ as an object in $\wh{C}$ there exists $V \in \C$ such that $U$ is a direct summand of $\psi(V)$. By considering their images we get that there exists $V$ in $\T$ such that $U$ is a direct summand of $\ov{\psi}(V)$.

Thus the functor $\ov{\psi} \colon \T \rt \wh{\T}$ is an equivalence up to direct summands.

(4) This follows from (2) and (3) and \ref{b-1}.
\end{proof}
Next we show
\begin{theorem}
  \label{abc} (with hypotheses as in \ref{first}). We have an exact sequence
  $$ 0 \rt \coker(\alpha) \rt \coker(\theta) \rt \coker(\beta) \rt 0.$$
\end{theorem}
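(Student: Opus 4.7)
The plan is to combine the Verdier localization exact sequences from \ref{Verd} with the injectivity statements already established in Proposition \ref{first}. First I would assemble the commutative diagram with right-exact rows
$$
\begin{CD}
G(\D) @>{i}>> G(\C) @>{p}>> G(\T) @>>> 0 \\
@VV{\alpha}V @VV{\theta}V @VV{\beta}V \\
G(\wh{\D}) @>{\hat{i}}>> G(\wh{\C}) @>{\hat{p}}>> G(\wh{\T}) @>>> 0
\end{CD}
$$
A standard diagram chase on a morphism of right-exact sequences yields the exact sequence
$$\coker(\alpha) \to \coker(\theta) \to \coker(\beta) \to 0,$$
where the maps are induced by $\hat{i}$ and $\hat{p}$ respectively. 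The content of the theorem then reduces to proving injectivity of the left map.

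Suppose $x \in G(\wh{\D})$ represents a class which maps to $0$ in $\coker(\theta)$, i.e.\ $\hat{i}(x) = \theta(y)$ for some $y \in G(\C)$. Applying $\hat{p}$ gives $0 = \hat{p}(\theta(y)) = \beta(p(y))$, and since $\beta$ is injective by Proposition \ref{first}(4), $p(y) = 0$, so $y = i(z)$ for some $z \in G(\D)$. Then $\hat{i}(x - \alpha(z)) = \theta(y) - \theta(i(z)) = 0$, whence $x \in \alpha(G(\D)) + \wh{K}$ where $\wh{K} := \ker \hat{i}$. So the injectivity question reduces to proving the inclusion $\wh{K} \subseteq \alpha(G(\D))$, and this inclusion is the main obstacle.

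To prove $\wh{K} \subseteq \alpha(G(\D))$, I would take $\xi \in \wh{K}$ and write $\xi = [V]$ for a single object $V \in \wh{\D}$, using the observation in \ref{Groth} that every element of the Grothendieck group of a triangulated category is of this form via $[U[1]] = -[U]$. Then $[V] = 0 \in G(\wh{\C})$, hence trivially $[V] \in \image(\theta)$. Hypothesis (1) of \ref{hyp-gen} states that $\psi \colon \C \to \wh{\C}$ is an equivalence up to direct summands, and hypothesis (2) gives weak cancellation on $\wh{\C}$; these are precisely the hypotheses of Theorem \ref{b-onto}, which therefore produces $W \in \C$ with $\psi(W) \cong V$. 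Since $V \in \wh{\D}$, hypothesis (4) of \ref{hyp-gen} forces $W \in \D$, and I conclude $\xi = [V] = [\psi(W)] = \alpha([W]) \in \alpha(G(\D))$. The crucial subtlety is this last step: upgrading the Grothendieck-level equality $[V] = 0$ in $G(\wh{\C})$ to an object-level isomorphism $V \cong \psi(W)$ is exactly what requires the equivalence-up-to-direct-summands and weak cancellation package of Theorem \ref{b-onto}; the rest is routine diagram chasing.
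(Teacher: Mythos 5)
Your proof is correct and follows essentially the same route as the paper's. The paper packages the argument slightly differently: it introduces the kernels $K = \ker i$ and $\wh{K} = \ker\wh{i}$, forms the induced map $\delta \colon K \to \wh{K}$, and shows $\delta$ is an isomorphism (surjectivity being the content), then concludes by a diagram chase; your version isolates the same key inclusion $\wh{K} \subseteq \alpha(G(\D))$ as the reduction step for injectivity of $\coker(\alpha) \to \coker(\theta)$, and proves it by the same mechanism (writing an element of $\wh{K}$ as $[V]$ with $V \in \wh{\D}$, observing $[V]=0$ in $G(\wh{\C})$ so trivially in $\image\theta$, applying Theorem \ref{b-onto} to lift $V$ to $W\in\C$, and invoking hypothesis (4) to get $W\in\D$). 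Both rest on the same use of \ref{b-onto} plus the descent hypothesis; your write-up is marginally more streamlined in that you never need to observe $[W]\in K$, only $[W]\in G(\D)$.
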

As an immediate corollary we obtain
\begin{corollary}
(with hypotheses as in \ref{abc})\begin{enumerate}[\rm (1)]
                                   \item $\theta$ is an isomorphism if and only if $\alpha$, $\beta$ are isomorphisms
                                   \item $\theta_\Q$ is an isomorphism if and only if $\alpha_\Q$, $\beta_\Q$ are isomorphisms
                                 \end{enumerate}
\end{corollary}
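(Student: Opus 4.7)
The plan is to deduce both statements formally from the short exact sequence
\[
0 \rt \coker(\alpha) \rt \coker(\theta) \rt \coker(\beta) \rt 0
\]
of Theorem \ref{abc}, together with the fact (Proposition \ref{first}) that $\alpha$, $\beta$, and $\theta$ are all injective. Injectivity means that each of these three maps is an isomorphism if and only if its cokernel vanishes, which is exactly the piece of information needed to convert the four-term sequence into an ``iff'' statement.

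For part (1), I would argue directly: if $\alpha$ and $\beta$ are isomorphisms, then $\coker(\alpha) = 0$ and $\coker(\beta) = 0$, and sandwiching forces $\coker(\theta) = 0$; combined with the injectivity of $\theta$ from \ref{first}(1), this gives that $\theta$ is an isomorphism. Conversely, if $\theta$ is an isomorphism, then $\coker(\theta) = 0$, and the exact sequence forces $\coker(\alpha) = 0$ and $\coker(\beta) = 0$ (the first by injecting into zero, the second as a quotient of zero); combined with the injectivity of $\alpha$ and $\beta$ from \ref{first}(4), this gives both isomorphisms.

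For part (2), I would apply $- \otimes_\Z \Q$ to the exact sequence. Since $\Q$ is flat over $\Z$, tensoring is exact, so we obtain a short exact sequence
\[
0 \rt \coker(\alpha)_\Q \rt \coker(\theta)_\Q \rt \coker(\beta)_\Q \rt 0,
\]
and flatness also identifies $\coker(f)_\Q$ with $\coker(f_\Q)$ and $\ker(f)_\Q$ with $\ker(f_\Q)$ for any homomorphism $f$ of abelian groups. In particular $\alpha_\Q$, $\beta_\Q$, $\theta_\Q$ remain injective, and each is an isomorphism iff the corresponding rationalized cokernel is zero. Then I repeat the same logical sandwich argument as in part (1) applied to the rationalized exact sequence. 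The argument is completely formal; the only step that requires minor care is the identification $\coker(f)\otimes_\Z \Q = \coker(f\otimes_\Z \Q)$, which follows from right exactness of tensor product, so no genuine obstacle arises.
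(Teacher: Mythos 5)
Your proposal is correct and is essentially the paper's own argument: the paper's proof consists precisely of invoking the injectivity of $\alpha$, $\beta$, $\theta$ from Proposition \ref{first} together with the short exact sequence of cokernels from Theorem \ref{abc}, and your rationalization step via flatness of $\Q$ over $\Z$ is the standard justification left implicit there. No issues.
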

\begin{proof}
We use the fact that $\alpha, \beta$ and $\theta$ are injective, see Proposition \ref{first},  and Theorem \ref{abc}.
\end{proof}
Next we give
\begin{proof}[Proof of Theorem \ref{abc}]
By \ref{Verd}
 we have a commutative diagram
 \[
  \xymatrix
{
 \
 \
  & G(\D)
\ar@{->}[r]^{i}
\ar@{->}[d]^{\alpha}
 & G(\C)
\ar@{->}[r]
\ar@{->}[d]^{\theta}
& G(\T)
\ar@{->}[r]
\ar@{->}[d]^{\beta}
&0
\\
 \
 \
  & G(\wh{\D})
\ar@{->}[r]^{\wh{i}}
 & G(\wh{\C})
\ar@{->}[r]
& G(\wh{\T})
    \ar@{->}[r]
    &0
\
 }
\]
Let $K = \ker i$ and $\wh{K} = \ker \wh{i}$. We note that $\alpha$ and $\theta$ induce a map $\delta \colon K \rt \wh{K}$ which is an injection (since $\alpha$ is an injection). So we have a commutative diagram
\[
  \xymatrix
{
 0
 \ar@{->}[r]
 & K
\ar@{->}[r]^{j}
\ar@{->}[d]^{\delta}
  & G(\D)
\ar@{->}[r]^{i}
\ar@{->}[d]^{\alpha}
 & G(\C)
\ar@{->}[r]
\ar@{->}[d]^{\theta}
& G(\T)
\ar@{->}[r]
\ar@{->}[d]^{\beta}
&0
\\
 0
 \ar@{->}[r]
  & \wh{K}
\ar@{->}[r]^{\wh{j}}
  & G(\wh{\D})
\ar@{->}[r]^{\wh{i}}
 & G(\wh{\C})
\ar@{->}[r]
& G(\wh{\T})
    \ar@{->}[r]
    &0
\
 }
\]
Claim: $\delta$ is an isomorphism.

Note if we prove the Claim then the result follows by a routine diagram chase.

Proof of Claim: We already know that $\delta$ is injective.
We prove that $\delta$ is also surjective. We consider $K$ and $\wh{K}$ as subgroups of $G(\D)$ and $G(\wh{\D})$ respectively. Let $[V] \in \wh{K}$. Then $\wh{i}([V] = 0$. We note that $\wh{i}([V]) = [V]$ considered as an element of $G(\wh{\C})$. So $[V] = 0 \in \image(\theta)$. Also by hypothesis $\wh{C}$ satisfies weak cancellation. So  by \ref{b-onto} there exists $U \in \C$ such that $\psi(U) = V$. By our hypotheses $U \in \D$. Also $i([U]) = 0$ (since $\theta$ is injective). So $[U] \in K$. Clearly $\delta([U]) = [V]$. So $\delta$ is surjective.
\end{proof}
\s\label{cx-curv} We give the definition of complexity and curvature. We follow \cite{A}. Let $(R, \n)$ be a Noetherian local ring and let $M$ be a finitely generated  $R$-module. Let $\beta_n(M)$ denote the $n^{th}$-betti number of $M$. Define
$$\cx_R M = \inf\{ i \mid \limsup_{n \rt \infty} \beta_n(M)/n^{i-1} < \infty \}.$$
If $(A,\m)$ is a complete intersection of codimension $c$ then $\cx_A M \leq c$. Also for $i = 0, \ldots, c$ there exists an $A$-module $M$ with complexity $i$.
It is elementary to verify that if $A$ is a complete intersection then $\D_i$ the set of MCM modules $M$ with complexity $\leq i$ is a thick subcategory of $\CMS(A)$.

If $R$ is not a complete intesection then $\cx_R k =\infty$ (here $k$ is the residue field of $R$). To deal with this the notion of curvature was introduced.
Set
$$\curv_R M = \limsup_{n \rt \infty} \sqrt[n]{\beta_n(M)}. $$
It can be shown that $\curv M \leq \curv k < \infty$. Also if $R$ is not a complete intersection then $\curv k > 1$. I{f $A$ is a Gorenstein ring then for $1 < \beta \leq \curv k$, it is easy to verify that $\D_\beta$,
the set of MCM $A$-modules with complexity $< \beta$ is a thick subcategory of $\CMS(A)$.

Finally we give
\begin{proof}[Proof of Theorem \ref{sub-ver}]
It can be easily verified that $\D, \CMS(A), \wh{D}, \CMS(\wh{A})$ satisfy the hypotheses of
\ref{hyp-gen}.
\end{proof}

\section{proof of Theorem \ref{chow-1}}
We first give
\begin{proof}[ Proof of Theorem \ref{chow-1}]
By \cite[1.5]{KK}, the natural map $G(A) \rt G(\wh{A})$ is injective. We have $G(A)_\Q \cong A_*(A)_\Q$. It follows from \cite[2.4]{KK} that the natural maps $A_i(A) \rt A_i(\wh{A})$ are injective for all $i \geq 0$.
 We note that $F_i(A) = <[A/Q] \mid \dim A/Q \leq i>$ defines a filtration on $G(A)$. Set $G_i(A)_\Q = (F_i(A)/F_{i-1}(A))_\Q$.
  We also have  $G(A)_\Q \cong \bigoplus_{i \geq 0} G_i(A)_\Q$. By Riemann-Roch we have $G(A)_\Q \cong A_*(A)_\Q$.
 The Riemann–Roch map decomposes into isomorphisms $G_i(A)_\Q \cong A_i(A)_\Q$ for $i \geq 0$.
 We show the natural map  $G_1(A)_\Q \rt G_1(\wh{A})_\Q$ is an isomorphism.

We have $G(\CMS(A)) = G(A)/\Z [A]$. We have a commutative diagram (as $A, \wh{A}$ are domains)
 \[
  \xymatrix
{
 0
 \ar@{->}[r]
  & \Q
\ar@{->}[r]^{i}
\ar@{->}[d]^{1}
 & G(A)_\Q
\ar@{->}[r]^{\pi}
\ar@{->}[d]^{\eta_\Q}
& G(\CMS(A))_\Q
\ar@{->}[r]
\ar@{->}[d]^{\theta_\Q}
&0
\\
 0
 \ar@{->}[r]
  & \Q
\ar@{->}[r]^{i^\prime}
 & G(\wh{A})_\Q
\ar@{->}[r]^{\pi^\prime}
& G(\CMS(\wh{A}))_\Q
    \ar@{->}[r]
    &0
\
 }
\]
As $A$ is excellent and as $A$ satisfies $R_{d-2}$ it follows that  $\wh{A}$ satisfies $R_{d-2}$, \cite[23.9]{Ma}.
We may assume that $A$ is not an isolated singularity as otherwise we have nothing to prove.
Let $I = P_1\cap\cdots \cap P_r$ define the singular locus of $\wh{A}$. Then $\height P_i = d-1$ for all $i$.

Let $Q$ be a prime of height $d-1$ in $\wh{A}$.

Case-1: $Q \neq P_j$ for all $j$. \\
Then $\Syz^A_{d}(A/Q) = X$ is free on the punctured spectrum of $A$. So there exists an MCM $A$-module $Y$ with $\wh{Y} \cong X$, see \cite[Theorem 3]{E}. We note that the image of $[\wh{A}/Q]$ in  $G(\CMS(\wh{A}))$
is $(-1)^d[X]$. So $(-1)^d[Y] \in G(\CMS(A))$ maps to the image of $[A/Q]$ in $\CMS(\wh{A})$. A diagram chase shows that there exists $[U] \in G(A)_\Q$ mapping to $[\wh{A}/Q] \in G(\wh{A})_\Q$.
 If $[\wh{A}/Q] \neq 0 $ in $G(\wh{A})_Q$ then $[U] \neq 0$ in $G(A)_\Q$. As $G(A)_\Q = \bigoplus_{i \geq 0} G_i(A)_\Q$ and the injective map $G(A)_\Q \rt G(\wh{A})_\Q$ maps $G_i(A)_\Q$ to $G_i(\wh{A})_\Q$  injectively, it follows that $[U] \in G_1(A)_\Q$.

Case-2: $Q = P_j$ for some $j$, say $Q = P_1$. \\
By prime avoidance we can choose a regular sequence $\xb = x_1, \ldots, x_{d-1} \in P_1 \setminus \bigcup_{j = 2}^{r}P_j$. Note $[\wh{A}/(\xb)] = 0$ in $G(\wh{A})$. We choose a filtration
$0 = H_0 \subsetneq H_1 \subsetneq\cdots  \subsetneq H_{s-1}   \subsetneq H_s = \wh{A}/(\xb)$ with $H_i/H_{i-1} = \wh{A}/Q_i$ for some prime $Q_i$. We note that $\height Q_i \geq d - 1$ and $Q_i \neq P_j$ for every $i \geq 1$ and $j \geq 2$. As $\Ass \wh{A}/(\xb) \subseteq \{ Q_1, \ldots, Q_s \}$ it follows that $Q_i = P_1$ for some $i$.
Thus we have an equation in $G(\wh{A})$
\[
0 = [\wh{A}/(\xb)] = a [\wh{A}/P_1] + b [\wh{A}/\wh{\m}] + \sum_{i = 1}^{s-2}b_i[\wh{A}/Q_i], \quad \text{where} \ a \geq 1, b \geq 0, \ \text{and}\ b_i \geq 0,
\]
where $Q_i \neq P_j$ for all $i$ and $j$. We note that $t[\wh{A}/\wh{\m}] = 0$ for some $t \geq 1$. By Case-1 it follows that there exists $[W] \in G_1(A)_\Q$ which maps to $[\wh{A}/P_1]$ in $G(\wh{A})_\Q$.

Thus the map $G_1(A)_\Q \rt G_1(\wh{A})_Q$ is bijective. The result follows.
\end{proof}
We now give
\begin{proof}[Proof of \ref{chow-2}]
It suffices to show that the natural map $A_i(A)_\Q \rt A_i(\wh{A})_\Q$ is an isomorphism for all $i = 0, 1, 2$. We note that it is injective. As $\wh{A}$ (and so $A$) are domains we have
$A_2(A)_\Q = A_2(\wh{A})_\Q = \Q$. By \ref{chow-1} the map $A_1(A)_\Q \rt A_1(\wh{A})_\Q$ is an isomorphism. We note that $A_0(A)_\Q = A_0(\wh{A})_\Q = 0$. The result follows.
\end{proof}
Next we give
\begin{proof}[Proof of \ref{chow-3}]
We note that $A_2(A) = C(A)$. Thus the natural map $A_2(A)_\Q \rt A_2(\wh{A})_\Q$ is an isomorphism if and only if the natural map $C(A)_\Q \rt C(\wh{A})_\Q$ is an isomorphism, see \cite[2.3]{KK}.
We consider the natural map $A_i(A)_\Q \rt A_i(\wh{A})_\Q$ for all $i = 0, 1, 2, 3$. We note that it is injective. As $\wh{A}$ (and so $A$) are domains we have
$A_3(A)_\Q = A_3(\wh{A})_\Q = \Q$. By \ref{chow-1} the map $A_1(A)_\Q \rt A_1(\wh{A})_\Q$ is an isomorphism. We note that $A_0(A)_\Q = A_0(\wh{A})_\Q = 0$. Thus the natural map
$A_*(A)_\Q \rt A_*(\wh{A})_\Q$ is an isomorphism if and only if  the natural map $C(A)_\Q \rt C(\wh{A})_\Q$ is an isomorphism. The result follows.
\end{proof}
\section{Examples}
We give several examples where our results hold.

\s Let $(A,\m)$ be an excellent Gorenstein equi-characteristic ring of positive even dimension such that $A$ has finite representation type. Assume the residue field $k$ of $A$ is perfect. Then $G(A)_\Q = \Q$ and $G(\wh{A})_\Q = \Q$, see \cite[1.5]{P-O}. As the natural map $\eta \colon G(A) \rt G(\wh{A})$ is injective it follows $\eta_\Q $ is an isomorphism

\s Let $S = \bigoplus_{n \geq 0}S_n$ be a graded (not necessarily standard) two dimensional Gorenstein normal domain over a field $k = S_0$. Assume that $k$ is a finite field or is the  algebraic closure of a finite field.
Let $A = S_{S_+}$ where $S_+ = \bigoplus_{n \geq 1}S_n$. As $A$ is excellent we have that $\wh{A}$ is a two dimensional normal domain. By the non-trivial results in
\cite[Theorem 4]{CS} and \cite[4.5]{G}
it follows that $C(\wh{A})_\Q = 0$ (here $C(\wh{A})$ is the class group of $\wh{A}$).
It is well known that if $R$ is a two dimensional Gorenstein normal domain then $G(\CMS(R))_\Q = C(R)_\Q$; see \cite[2.5]{CD}.
Thus $G(\CMS(\wh{A}))_\Q = 0$. As $\eta \colon G(\CMS(A))_\Q \rt G(\CMS(\wh{A})_\Q$ is injective, it follows that $G(\CMS(A))_\Q = 0$. So $G(A)_\Q \cong G(\wh{A})_\Q$, see \ref{main3-body}.

\s Let $R = \mathbb{C}[X_1, \ldots, X_n]$ with $n \geq 2$ and let $G \subseteq SL_n(\Cb)$ be a finite group acting linearly on $R$. Let $S = R^G$ be the ring of invariants. Then $S$ is Gorenstein, see \cite[section 4, Theorem 1]{WK}. Assume $S$ is an isolated singularity. Let $A = S_{S_+}$. Note $\wh{A} = \Cb[[X_1, \ldots, X_n]]^G$. Then by \cite[Chapter 3, 5.6]{AR} it follows that $G(\wh{A}) = \Z \oplus H$ where $H$ is a finite abelian group.
We note that we have a linear surjective map $G(A) \rt \Z$ given by the rank function. It follows that $\dim_\Q G(A)_\Q \geq 1$. As the natural map $\eta_\Q \colon G(A)_\Q \rt G(\wh{A})_\Q$ is injective and as $G(\wh{A})_\Q  = \Q$ it follows that $\eta_\Q$ is an isomorphism.

\s  Let $R = k[t^{a_1}, \cdots, t^{a_m}]$ be a symmetric numerical semi-group ring. Then $R$ is Gorenstein, see \cite[4.4.8]{BH}.  Set $S = R[X]$. Let
$$ B = S_{(t^{a_1}, \cdots, t^{a_m}, X)}.$$ The completion of $B$ is
$k[[t^{a_1}, \cdots, t^{a_m}]][[X]]$  which is a domain.  Let $A$ be the Henselization of $B$. Then by \ref{chow-2} we have $\eta_\Q \colon G(A)_\Q \rt G(\wh{A})_\Q$ is an isomorphism.

\s Let $R = k[X_1, X_2, X_3]$  and let $G \subseteq SL_3(k)$ be a finite group acting linearly on $R$. Assume order of $G$ is invertible in $k$. Let $S = R^G$ be the ring of invariants. Then $S$ is Gorenstein, see \cite[section 4, Theorem 1]{WK}. Let $A = S_{S_+}$. Note $\wh{A} = k[[X_1, X_2, X_3]]^G$. Then by \cite[Chapter 3, 7.2]{AR} it follows that $C(\wh{A})$, the class group of $\wh{A}$ is a finite abelian group. Set $B$ to be the Henselization of $A$. So $\wh{B} = \wh{A}$. Then as the map $C(B) \rt C(\wh{B})$ is injective it follows that the $C(B)$ is also a finite group. Thus $C(B)_\Q = C(\wh{B})_Q = 0$. We note that $B$ is normal  and so it is $R_1$. By \ref{chow-2} we have $\eta_\Q \colon G(B)_\Q \rt G(\wh{B})_\Q$ is an isomorphism.

%\section*{Acknowledgements}

\end{document}